\newtheorem{thm}{Theorem}[section]
\newtheorem{lem}[thm]{Lemma}
\newtheorem{prop}[thm]{Proposition}
\newtheorem{cor}[thm]{Corollary}
\theoremstyle{definition}
\newtheorem{defn}[thm]{Definition}
\newtheorem{rem}[thm]{Remark}
\newtheorem*{notn}{Notation}
\newcommand{\defbold}{\textbf}
\newcommand{\inv}{^{-1}}
\newcommand{\tdlc}{t.d.l.c.\@\xspace}
\newcommand{\Homeo}{\mathrm{Homeo}}
\newcommand{\bN}{\mathbb{N}}
\newcommand{\bQ}{\mathbb{Q}}
\newcommand{\bZ}{\mathbb{Z}}
\newcommand{\mc}[1]{\mathcal{#1}}
\begin{document}

\title{Equicontinuity, orbit closures and invariant compact open sets for group actions on zero-dimensional spaces}

\author{Colin D. Reid\thanks{The author was an ARC DECRA fellow.  Research supported in part by ARC Discovery Project DP120100996.}}
\affil{University of Newcastle, School of Mathematical and Physical Sciences, Callaghan, NSW 2308, Australia \\ colin@reidit.net}
%\email{colin@reidit.net}

%\subjclass[2010]{37B05}

\maketitle

\begin{abstract}
Let $X$ be a locally compact zero-dimensional space, let $S$ be an equicontinuous set of homeomorphisms such that $1 \in S = S\inv$, and suppose that $\overline{Gx}$ is compact for each $x \in X$, where $G = \langle S \rangle$.  We show in this setting that a number of conditions are equivalent: (a) $G$ acts minimally on the closure of each orbit; (b) the orbit closure relation is closed; (c) for every compact open subset $U$ of $X$, there is $F \subseteq G$ finite such that $\bigcap_{g \in F}g(U)$ is $G$-invariant.  All of these are equivalent to a notion of recurrence, which is a variation on a concept of Auslander--Glasner--Weiss.  It follows in particular that the action is distal if and only if it is equicontinuous.
\end{abstract}

\setcounter{tocdepth}{1}

\section{Introduction}

Let $X$ be a compact metrizable zero-dimensional space and let $G$ be a finitely generated group of homeomorphisms of $G$.  Define the orbit closure relation $R$ on $X$ by setting $(x,y) \in R$ if and only if $y \in \overline{Gx}$.  Then by a result of J.~Auslander, E.~Glasner and B.~Weiss (\cite[Theorem~1.8]{AGW}), $G$ has minimal orbit closures (that is, $R$ is a symmetric relation) if and only if $R$ is a closed subset of $X \times X$; indeed both conditions are equivalent to a kind of recurrence defined in terms of a word metric on $G$.  It follows (\cite[Corollary~1.9]{AGW}) that the action of $G$ is distal if and only if it is equicontinuous.

We prove similar consequences of minimal orbit closures in a more general setting.  In particular, we generalize from finitely generated group actions to group actions that are \defbold{equicontinuously generated}, meaning that the group $G$ has a symmetric generating set that is equicontinuous with respect to some compatible uniformity on the space.  For this purpose we use a variation of the notion of recurrence given in \cite{AGW}, with modifications arising from the change from finitely generated to equicontinuously generated groups.

\begin{defn}\label{defn:cones}
Let $G$ be a group with a specified generating set $S$, such that $1 \in S$ and $S = S\inv$.  Define the associated word metric $|\cdot|$ with respect to $S$.  Write $S^n$ for the set of products of at most $n$ elements of $S$, and let $S^{\infty} =  \langle S \rangle = G$.  Given $g \in G$, define $K(g) = S^{|g|-1}g$.

Suppose $G$ acts on by homeomorphisms on a locally compact Hausdorff space $X$ and let $x \in X$.  Then $x$ is \defbold{$S$-recurrent} for the action if for every net $(g_i)_{i \in I}$ in $G$ such that $|g_i| \rightarrow \infty$ and every neighbourhood $U$ of $x$, there is an integer $n$, a subnet $(g_{i(j)})_{j \in J}$ and $c_j \in K(g_{i(j)})$, such that $c_jx \in U$ and $|c_j| \le n$.

The point $x$ is \defbold{almost periodic} for $G$ if for any neighbourhood $U$ of $x$, and letting $N = \{h \in G \mid hx \in U\}$, then there is a finite subset $F$ of $G$ such that $FN = G$.

A locally compact space is \defbold{zero-dimensional} if it is Hausdorff and has a base consisting of clopen sets.
\end{defn}

We prove the following for equicontinuously generated group actions on locally compact zero-dimensional spaces, which highlights the relationship between orbit closures and compact open invariant sets.

\begin{thm}\label{thmintro:vinfty}
Let $X$ be a locally compact zero-dimensional space, let $G$ be an equicontinuously generated group of homeomorphisms of $X$, let $V$ be an open subset of $X$ and write $V_{\infty} = \bigcap_{g \in G}g(V)$.  Suppose that $V_{\infty}$ is compact.  Then $V_{\infty}$ is open if and only if $\overline{Gx} \cap V_{\infty} = \emptyset$ for every $x \in X \smallsetminus V_{\infty}$.
\end{thm}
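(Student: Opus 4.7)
The forward implication is immediate: $V_\infty$ is manifestly $G$-invariant (computing $hV_\infty = \bigcap_g (hg)(V) = V_\infty$ for $h \in G$), so if $V_\infty$ is open then $X \smallsetminus V_\infty$ is closed, $G$-invariant, and hence contains $\overline{Gx}$ whenever it contains $x$.

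For the converse I plan to argue by contradiction. First I would reduce to the case that $V$ is itself compact open: local compactness, zero-dimensionality and the compactness of $V_\infty$ furnish a compact open $W$ with $V_\infty \subseteq W \subseteq V$, and the $G$-invariance of $V_\infty$ forces $W_\infty = V_\infty$, so we may replace $V$ by $W$. Next I would introduce the decreasing family $V^{(N)} := \bigcap_{g \in S^N} g^{-1}(V)$ of open supersets of $V_\infty$. Iterating the equicontinuity of $S$ shows each $S^N$ is equicontinuous; applying this to the clopen entourage $\epsilon_V := (V \times V) \cup ((X \smallsetminus V) \times (X \smallsetminus V))$ yields, for each $N$, an entourage $\delta_N$ with $\delta_N(x) \subseteq V^{(N)}$ whenever $x \in V^{(N)}$, so $V^{(N)}$ is open, and $\bigcap_N V^{(N)} = V_\infty$. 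The one-step boundary $B := V \smallsetminus V^{(1)} = \{z \in V : Sz \not\subseteq V\}$ is then closed in the compact set $V$ and disjoint from $V_\infty$.

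Now assume $V_\infty$ is not open. Choose $y \in V_\infty$ with no neighbourhood contained in $V_\infty$, and a net $x_i \to y$ in $V \smallsetminus V_\infty$. For each $i$, let $g_i$ be of minimal length $n_i$ with $g_i x_i \notin V$, and write $g_i = s_i h_i$ with $s_i \in S$ and $|h_i| = n_i - 1$; the minimality of $g_i$ gives $z_i := h_i x_i \in B$. The lengths $n_i$ must tend to infinity, for otherwise $g_i$ lies in a fixed $S^N$ and equicontinuity of $S^N$ combined with $g_i y \in V_\infty \subseteq V$ would force $g_i x_i \in V$. Passing to a subnet, $z_i \to z^* \in B$ with $z^* \notin V_\infty$, and the hypothesis then gives $\overline{Gz^*} \cap V_\infty = \emptyset$. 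To reach a contradiction I would aim to show $y \in \overline{Gz^*}$ via the candidate net $h_i^{-1} \in G$: since $h_i^{-1} z_i = x_i \to y$, it suffices to propagate the convergence $z_i \to z^*$ through $h_i^{-1}$ to conclude $h_i^{-1} z^* \to y$.

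This last propagation is the main obstacle: because $|h_i^{-1}| = n_i - 1 \to \infty$, the equicontinuity of $S^N$ at any fixed $N$ does not suffice, and we must arrange $(z_i, z^*) \in \delta_{n_i - 1}^\epsilon$ in an entourage whose finesse grows with $n_i$. I plan to resolve this by a coupled selection of the net. For each refinement $\epsilon$ of $\epsilon_V$ and each $N$, the maximal equicontinuity entourage $\delta_N^\epsilon$ for $S^N$ has $y$-section equal to $\{x : (gx,gy) \in \epsilon \text{ for all } g \in S^N\}$, which is an open neighbourhood of $y$ contained in $V^{(N)}$; since $V_\infty$ has no open neighbourhood of $y$ while every $V^{(N)}$ does, the set $\delta_N^\epsilon(y) \smallsetminus V_\infty$ is always nonempty. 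A self-consistent (diagonal) choice of $x_i$ inside these shrinking neighbourhoods should make $(x_i, y)$ tight enough relative to $n_i$ to keep the $S^{n_i - 1}$-equicontinuity budget in force, thereby yielding $(h_i^{-1} z^*, x_i) \in \epsilon$ along a subnet and hence $h_i^{-1} z^* \to y$, contradicting $\overline{Gz^*} \cap V_\infty = \emptyset$.
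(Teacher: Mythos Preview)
Your initial reductions are sound: the forward implication, the replacement of $V$ by a compact open $W$, the openness of each $V^{(N)}$, and the argument that $n_i\to\infty$ are all correct and parallel the paper's setup.  (One small caveat: iterating equicontinuity of $S$ to get equicontinuity of $S^N$ at $x$ requires the intermediate sets $S^kx$ to have compact closure; this holds for $x\in V^{(N-1)}$ since then $S^kx\subseteq V$, which is all you use.)

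The genuine gap is the final paragraph.  To obtain $(h_i^{-1}z^*,x_i)=(h_i^{-1}z^*,h_i^{-1}z_i)\in\epsilon$ you would need equicontinuity of $S^{n_i-1}$ \emph{at $z^*$}, i.e.\ $(z^*,z_i)$ lying in an entourage that works for $S^{n_i-1}$ there.  That entourage shrinks as $n_i\to\infty$, while $z^*$ is only determined \emph{after} the entire net $(z_i)$ has been chosen; one cannot retroactively force $(z_i,z^*)$ to beat an $n_i$-dependent threshold.  Your ``coupled selection'' instead controls $(x_i,y)$ via equicontinuity at $y$; that yields $(z_i,h_iy)\in\epsilon$, which says $z_i$ is close to the $G$-orbit of $y$ inside $V_\infty$, not that $h_i^{-1}z^*$ is close to $x_i$.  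Even the weaker statement suffers from the circularity you note: choosing $x_i\in\delta_N^\epsilon(y)$ forces $n_i>N$ but gives no upper bound on $n_i$, so there is no reason to expect $x_i\in\delta_{n_i-1}^\epsilon(y)$.

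The paper avoids propagating convergence through words of unbounded length altogether.  It uses an Arzel\`a--Ascoli argument (Lemma~\ref{lem:equi_clopen}(ii)) to show that for $m\le n$ the collection $\{gV^{(n)}:g\in S^m\}$ is \emph{finite}; hence $P_n:=\bigl(\bigcup_{g\in S^n}gV^{(n)}\bigr)\smallsetminus V^{(1)}$ is compact.  A one-step shell argument shows that any orbit meeting $V^{(n)}\smallsetminus V_\infty$ also meets $V^{(m)}\smallsetminus V^{(m+1)}$ for every $m\le n$, so the $P_n$ are nonempty and nested whenever $V_\infty$ fails to be open.  Any point of $\bigcap_nP_n$ then has its orbit visiting every shell, and an accumulation point of that orbit lands in $V_\infty$.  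This is the missing idea: a finiteness/compactness input replacing the attempt to control $S^{n_i-1}$ directly.
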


Using this result, we prove a generalization of the Auslander--Glasner--Weiss theorem.  (Some of the implications follow with much weaker hypotheses; these are summarized in Proposition~\ref{prop:AGW_any_gen}.)

\begin{thm}\label{thmintro:AGW_generalized}
Let $X$ be a locally compact zero-dimensional space, let $S \subseteq \Homeo(X)$ with $1 \in S$ and $S = S\inv$, such that $S$ is equicontinuous with respect to some compatible uniformity, and let $G = \langle S \rangle$.  Let $U$ be an open subset of $X$, and write $U^*$ for the union of all compact $G$-invariant subsets of $U$.  Then the following are equivalent:
\begin{enumerate}[(i)]
\item\label{mainthm:1} $U^*$ is open in $X$ and consists of $S$-recurrent points.
\item\label{mainthm:2} $U^*$ is open in $X$ and consists of almost periodic points for $G$.
\item\label{mainthm:3} For every compact open subset $V$ of $U$, the intersection $\bigcap_{g \in G}g(V)$ is open.
\item\label{mainthm:4} Given $x \in U$ and $y \in U^*$ such that $y \in \overline{Gx}$, then $x \in \overline{Gy}$.
\item\label{mainthm:5} There is an open neighbourhood $W$ of $U^*$ such that the $G$-orbit closure relation restricted to $W \times W$ is closed.
\item\label{mainthm:6} $U^*$ is open in $X$ and there is a $G$-equivariant quotient map $\phi: U^* \rightarrow Y$, where $Y$ is a locally compact zero-dimensional space such that $G$ acts trivially on $Y$ and minimally on each fibre of $\phi$.
\end{enumerate}
\end{thm}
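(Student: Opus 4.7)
My plan is to rely on Proposition~\ref{prop:AGW_any_gen}, which is advertised as already giving the equivalence of (at least) (i), (ii), (iv) and (v) under the weaker hypotheses on the action. The remaining task is to insert (iii) and (vi) into this equivalence, and this is where zero-dimensionality and Theorem~\ref{thmintro:vinfty} become essential. I would arrange the new content as (iv) $\Rightarrow$ (iii) $\Rightarrow$ (iv) and (iii) $\Leftrightarrow$ (vi).

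For (iv) $\Rightarrow$ (iii), let $V\subseteq U$ be compact open; I want $V_{\infty}=\bigcap_{g\in G}g(V)$ open. Since $V$ is compact open, each $g(V)$ is clopen, so $V_{\infty}$ is closed and $G$-invariant, and $V_{\infty}\subseteq V$ is compact. By Theorem~\ref{thmintro:vinfty} it suffices to rule out the existence of $x\in X\smallsetminus V_{\infty}$ with $\overline{Gx}\cap V_{\infty}\neq\emptyset$. Suppose such $x$ and $y\in\overline{Gx}\cap V_{\infty}$ exist. Density of $Gx$ in $\overline{Gx}$ together with the fact that $V$ is an open neighbourhood of $y$ gives some $gx\in V\subseteq U$; replacing $x$ by $gx$ (using $G$-invariance of $V_{\infty}$ to preserve $x\notin V_{\infty}$) I may assume $x\in U$. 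Since $V_{\infty}\subseteq U^*$, the point $y$ lies in $U^*$, and (iv) supplies $x\in\overline{Gy}$. But $V_{\infty}$ is closed and $G$-invariant, so $Gy\subseteq V_{\infty}$ forces $\overline{Gy}\subseteq V_{\infty}$, contradicting $x\notin V_{\infty}$.

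For (iii) $\Rightarrow$ (iv), I would first observe that (iii) forces $U^*$ to be open and to admit a basis of $G$-invariant compact open subsets of $U$: given any compact $G$-invariant $K\subseteq U$, zero-dimensionality and local compactness produce a compact open $V$ with $K\subseteq V\subseteq U$, and then (iii) makes $V_{\infty}\supseteq K$ a compact open $G$-invariant neighbourhood of $K$ contained in $U$. Now, given $x\in U$ and $y\in U^*$ with $y\in\overline{Gx}$, pick such a $V_{\infty}$ containing $y$; openness forces $Gx\cap V_{\infty}\neq\emptyset$, and invariance then forces $x\in V_{\infty}$. Both $x$ and $y$ thus lie in the compact $G$-invariant clopen set $V_{\infty}\subseteq U^*$, so I may feed the situation into Proposition~\ref{prop:AGW_any_gen} (localized to the $G$-space $V_{\infty}$, where condition (iii) restricted to $V_{\infty}$ translates into the hypothesis handled there) to extract the symmetric conclusion $x\in\overline{Gy}$. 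For the remaining (iii) $\Leftrightarrow$ (vi): under (iii) the orbit-closure equivalence relation on $U^*$ has a basis of saturating clopen invariant sets, so the quotient is Hausdorff zero-dimensional with trivial induced action and minimal fibres; conversely, (vi) supplies a basis of $G$-invariant compact opens pulled back from $Y$, immediately giving (iii).

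The main obstacle I foresee is cleanly extracting the symmetry $\overline{Gx}=\overline{Gy}$ in the (iii) $\Rightarrow$ (iv) step without circular appeal to (iv) itself. This is where equicontinuity of $S$ is load-bearing: it should propagate to equicontinuity of $G$ on each compact orbit closure (using the compact-orbit hypothesis packaged into Proposition~\ref{prop:AGW_any_gen}), and on a compact equicontinuous $G$-flow the preorder $y\in\overline{Gx}$ is automatically an equivalence relation. Without equicontinuity, (iii) alone does not separate orbit closures from each other, and the cycle cannot close; matching the role of the equicontinuous-generation hypothesis in the statement.
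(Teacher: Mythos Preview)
Your plan contains a fundamental gap: Proposition~\ref{prop:AGW_any_gen} does \emph{not} give the equivalence of (i), (ii), (iv) and (v). It only supplies one-way implications, namely (vi)$\Rightarrow$(v), (v)$\Rightarrow$(iv), (iii)$\Rightarrow$(ii) and (ii)$\Rightarrow$(i). Nothing there closes the cycle back from (i). Combining your (iv)$\Rightarrow$(iii) (which is correct and matches the paper's use of Theorem~\ref{thmintro:vinfty}) with Proposition~\ref{prop:AGW_any_gen} and your (iii)$\Leftrightarrow$(vi), you obtain the chain (vi)$\Rightarrow$(v)$\Rightarrow$(iv)$\Rightarrow$(iii)$\Rightarrow$(ii)$\Rightarrow$(i) together with (iii)$\Rightarrow$(vi); thus (iii)--(vi) become equivalent and both imply (ii) and (i), but you never show that (i) implies anything. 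The implication (i)$\Rightarrow$(v) is precisely the main new work in the paper's proof: it is a direct net argument on $U^*\times U^*$, choosing $g_i$ of minimal word length landing in a compact open $V$, showing $|g_i|\to\infty$ via equicontinuity of each $S^n$, and then invoking $S$-recurrence of the limit point $w$ to produce short $c_i\in K(g_i^{-1})$ that contradict the minimality of $|g_i|$. This is the only place $S$-recurrence is used, and it cannot be bypassed.

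A secondary point: your final paragraph asserts that equicontinuity of $S$ ``should propagate to equicontinuity of $G$ on each compact orbit closure''. That conclusion is exactly Corollary~\ref{corintro:distal_to_equi:compact}, which is a \emph{consequence} of the theorem under the additional hypothesis of distality; it is not available in advance. Equicontinuity of $S$ together with compact orbit closures yields, via Lemma~\ref{lem:equi_product}, only that each $S^n$ is equicontinuous on $U^*$, not that $G=\bigcup_n S^n$ is. Your (iii)$\Rightarrow$(iv) step can be repaired without this: once you have $x\in V_\infty\subseteq U^*$, Proposition~\ref{prop:AGW_any_gen}(iii) makes $x$ almost periodic, and Lemma~\ref{lem:almost_periodic} then gives minimality of $G$ on $\overline{Gx}$, whence $x\in\overline{Gy}$. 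But this still leaves the (i)$\Rightarrow$(v) gap untouched.
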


As a consequence, we obtain a situation where an equicontinuously generated distal action is actually equicontinuous, generalizing for instance \cite[Theorem~2]{Ellis}.

\begin{cor}\label{corintro:distal_to_equi:compact}
Let $X$ be a locally compact zero-dimensional space equipped with a compatible uniformity, let $S \subseteq \Homeo(X)$ with $1 \in S$ and $S = S\inv$, let $G = \langle S \rangle$, and let $x \in X$.  Suppose that $G$ is distal and $\overline{Gx}$ is compact for every $x \in X$.  Then $G$ is equicontinuous if and only if $S$ is equicontinuous.
\end{cor}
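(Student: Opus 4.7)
The ``only if'' direction holds because $S \subseteq G$, so the work is in deducing equicontinuity of $G$ from equicontinuity of $S$ under the hypotheses of distality and compact orbit closures. The plan is to apply Theorem~\ref{thmintro:AGW_generalized} twice: once to the given action on $X$ itself, and once to the diagonal action of $G$ on $X \times X$. In each case the hypotheses of the theorem are verified using the classical fact that a distal action with compact orbit closures has minimal orbit closures, so condition (iv) of Theorem~\ref{thmintro:AGW_generalized} holds (taking the whole space as $U$ and as $U^*$), whence condition (iii) also holds: for every compact open $V$ in the relevant space, $\bigcap_g g(V)$ is open.

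Applied to $X$, condition (iii) shows that for each $x \in X$ and any compact open $V \supseteq \overline{Gx}$, the set $V_\infty = \bigcap_g g(V)$ is a compact, $G$-invariant open neighbourhood of $\overline{Gx}$; by a standard compactness argument, every compact $K \subseteq X$ is then contained in some compact, $G$-invariant open $K' \subseteq X$. For the diagonal action on $X \times X$, the hypotheses transfer transparently: equicontinuity of $\{s \times s : s \in S\}$ follows from that of $S$, distality passes to products, and orbit closures $\overline{G(a,b)}$ are contained in $\overline{Ga} \times \overline{Gb}$, hence are compact. Consequently, condition (iii) on $X \times X$ gives that $W_\infty := \bigcap_g (g \times g)(W)$ is open for every compact open $W \subseteq X \times X$.

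To construct a $G$-invariant entourage inside a given entourage $E$ of $X$, I would fix a compact $K \subseteq X$, choose $K' \supseteq K$ compact $G$-invariant open, select a compact open $W$ satisfying $\Delta_{K'} \subseteq W \subseteq E \cap (K' \times K')$ (possible by local compactness and zero-dimensionality of $X \times X$), and form $W_\infty$. Since $K'$ is $G$-invariant, $\Delta_{K'}$ is fixed setwise by every $g \times g$, so $\Delta_{K'} \subseteq W_\infty$; moreover $W_\infty$ is open, diagonally $G$-invariant, and contained in $E$. Taking the union of all such $W_\infty$ as $K$ ranges over an exhausting family of compacta in $X$ yields an open, $G$-invariant neighbourhood of $\Delta_X$ contained in $E$, which is the desired entourage.

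The main obstacle is that condition (iii) of Theorem~\ref{thmintro:AGW_generalized} directly produces $G$-invariant open sets only in $X$, whereas equicontinuity demands $G$-invariant open neighbourhoods of $\Delta_X$ in $X \times X$ (with $G$ acting diagonally). Applying the theorem a second time to the diagonal action bridges this gap, but one must check both that the hypotheses do pass to $X \times X$, and that $\Delta_{K'}$ actually sits inside the $G$-invariant open set $W_\infty$ produced \emph{---} a step that depends crucially on $K'$ itself being $G$-invariant.
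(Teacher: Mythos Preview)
Your proposal is correct and follows essentially the same route as the paper: verify condition~(\ref{mainthm:4}) of Theorem~\ref{thmintro:AGW_generalized} via distality (Lemma~\ref{lem:distal_orbit_closure}/Theorem~\ref{thm:ellis}), use condition~(\ref{mainthm:3}) on $X$ to produce compact $G$-invariant open sets, and then apply the theorem again to the diagonal action on the product to produce $G$-invariant neighbourhoods of the diagonal inside a given entourage. The only cosmetic difference is that the paper first localizes to a single compact $G$-invariant $V$ and works on $V\times V$ (this is packaged as Corollary~\ref{cor:distal_to_equi}, from which the global statement follows pointwise), whereas you work directly on $X\times X$ and take a union; one small wording caveat is that ``exhausting family of compacta'' suggests $\sigma$-compactness, which is not assumed---but all you actually need is a cover of $X$ by compact sets (singletons suffice), and your union argument goes through unchanged.
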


\begin{rem}
The original motivation for this article was to develop tools for understanding the dynamics of groups of automorphisms of totally disconnected, locally compact (\tdlc) groups on coset spaces.  In particular, suppose $G$ is a \tdlc group; $S$ is a set of automorphisms of $G$ such that $1 \in S = S\inv$, $S$ is equipped with a compact topology and $(s,g) \mapsto s(g)$ is a continuous map from $S \times G$ to $G$; and $K$ is a closed subgroup of $G$ such that $s(K) = K$ for all $s \in S$.  Then we have an action by homeomorphisms of $H = \langle S \rangle$ on the locally compact zero-dimensional space $G/K$ such that $S$ is equicontinuous with respect to a natural uniformity, and thus Theorem~\ref{thmintro:AGW_generalized} applies to this situation.  The consequences are explored in a second article (\cite{ReidDistal}).
\end{rem}

\subsection*{Acknowledgement}  I thank Eli Glasner for bringing the article \cite{AGW} to my attention, which inspired the present article.

\section{Proof of Theorem~\ref{thmintro:vinfty}}

\begin{notn}
Given a group $G$, a subset $X$ of $G$ and a set of automorphisms $Y$, we say that $X$ is \defbold{$Y$-invariant} if $y(X) = X$ for all $y \in Y$.  Given subsets $X$ and $Y$ of a group $G$, we define $X\inv := \{x\inv \mid x \in X\}$, $XY := \{xy \mid x \in X, y \in Y\}$, $X^n = \{x_1x_2 \dots x_n \mid x_1,x_2,\dots,x_n \in X\}$ and $X^{\infty} = \bigcup_{n \in \bN}X^n$.  All group actions are left actions unless otherwise stated: given a group $G$ acting on a set $X$, we write $gx$ for the image of an element or subset $x$ of $X$ under $g \in G$; for $x \in X$, $Gx$ for the set $\{gx \mid g \in G\}$; and for $Y \subseteq X$, $GY$ for the set $\{gy \mid g \in G, y \in Y\}$.
\end{notn}

\begin{defn}
Let $X$ be a locally compact Hausdorff space with a compatible uniformity $\mc{U}$, let $x \in X$ and let $S \subseteq \Homeo(X)$.  The set $S$ is \defbold{bounded at $x$} if $\overline{Sx}$ is compact; it is \defbold{equicontinuous at $x$} if for every entourage $E \in \mc{U}$, there is a neighbourhood $U$ of $x$ such that 
\[
\forall s \in S \; \forall y \in U: (sx,sy) \in E.
\]
We say $S$ is \defbold{equicontinuous} if $S$ is equicontinuous at every $x \in X$, and \defbold{pointwise bounded} if it is bounded at every $x \in X$.
\end{defn}

We remark that every locally compact Hausdorff space admits a compatible uniformity; for our applications, the choice of uniformity will not be significant in practice.  In particular, it is easily seen that if $\overline{Sx}$ is compact, then all compatible uniformities are equivalent for determining whether or not $S$ is equicontinuous at $x$.

Equicontinuity is preserved by finite unions, and finite products of pointwise bounded equicontinuous families are pointwise bounded equicontinuous.  To see why equicontinuity on its own is not preserved by products, let $f$ be a homeomorphism of $\bQ_2$ such that for all $x \in \bQ_2 \smallsetminus \bZ_2$ and $y \in \bZ_2$, $f(x+y) = f(x) + y/|x|_2$, and consider the family $S = \{x \mapsto f(x)+2^{-n} \mid n \in \bN\}$ of homeomorphisms of $\bQ_2$.  In this case, $S$ is equicontinuous with respect to the $2$-adic uniformity, but $S^2$ is nowhere equicontinuous.

\begin{lem}\label{lem:equi_product}
Let $X$ be a locally compact Hausdorff space equipped with some uniformity $\mc{U}$, let $S,T \subseteq \Homeo(G)$ and let $x \in X$.
\begin{enumerate}[(i)]
\item If $S$ and $T$ are equicontinuous at $x$, then so is $S \cup T$.
\item Suppose that $\overline{Tx}$ is compact, $T$ is equicontinuous at $x$, and $\overline{Tx}$ consists of equicontinuous points for $S$ on $X$.  Then $ST$ is equicontinuous at $x \in X$.  If in addition, $\overline{Sy}$ is compact for every $y \in \overline{Tx}$, then $\overline{STx}$ is compact.
\end{enumerate}
\end{lem}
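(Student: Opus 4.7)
For part (i), the proof is immediate: given an entourage $E$, the intersection $U_S \cap U_T$ of witnessing neighbourhoods works for $S \cup T$.

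For the equicontinuity claim in part (ii), the plan is to combine a Lebesgue-number argument on the compact set $\overline{Tx}$ with the equicontinuity of $T$ at $x$. I would fix an entourage $E$ and a symmetric entourage $E_1$ with $E_1 \circ E_1 \subseteq E$. For each $z \in \overline{Tx}$, equicontinuity of $S$ at $z$ yields an open neighbourhood $W_z$ with $(sw,sz) \in E_1$ for all $s \in S$ and $w \in W_z$. Extracting a finite subcover $W_{z_1},\dots,W_{z_n}$ of $\overline{Tx}$ and invoking the Lebesgue-number lemma for uniform spaces, I obtain an entourage $F$ such that for every $z \in \overline{Tx}$ the set $F[z]$ is contained in some $W_{z_i}$. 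Equicontinuity of $T$ at $x$ then produces a neighbourhood $U$ of $x$ with $(tx,ty) \in F$ for all $t \in T$ and $y \in U$. For any such $t$ and $y$, both $tx$ and $ty$ lie in a common $W_{z_i}$, giving $(stx,sz_i)\in E_1$ and $(sty,sz_i)\in E_1$, hence $(stx,sty) \in E_1 \circ E_1 \subseteq E$ for all $s \in S$, as required.

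For the compactness claim, given a net $(s_i t_i x)_{i \in I}$ in $STx$, I would first use compactness of $\overline{Tx}$ to pass to a subnet with $t_i x \to y$ for some $y \in \overline{Tx}$, and then use compactness of $\overline{Sy}$ to pass to a further subnet with $s_i y \to z$. Equicontinuity of $S$ at $y$ forces $(s_i t_i x, s_i y) \in E$ eventually for each entourage $E$, so $s_i t_i x \to z$ as well. Thus every net in $STx$ has a convergent subnet in $X$, which shows $\overline{STx}$ is compact.

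The main obstacle is the equicontinuity step in (ii): $tx$ ranges over the whole of $\overline{Tx}$ as $t$ varies through $T$, so we cannot simply combine a witness for $T$ at $x$ with a single witness for $S$ at one point. The Lebesgue-number lemma on the compact set $\overline{Tx}$ is precisely what lets us replace the pointwise equicontinuity of $S$ on $\overline{Tx}$ by a uniform statement, which can then be pulled back to $x$ by the equicontinuity of $T$.
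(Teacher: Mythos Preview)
Your argument is correct.  For part~(i) and the equicontinuity claim in part~(ii), your proof follows essentially the same route as the paper's: the paper also covers $\overline{Tx}$ by finitely many neighbourhoods on which $S$ is uniformly controlled and then pulls this back via the equicontinuity of $T$ at $x$.  Where you invoke the Lebesgue-number lemma, the paper carries out the equivalent construction by hand (shrinking each $V_z$ to a smaller $W_z$ with an associated entourage $E_z$ and then intersecting the $E_{z_i}$), and where you use a triangle inequality through the centre $z_i$, the paper instead arranges that both $tx$ and $ty$ lie in a single $V_{z_i}$ on which any pair of points is sent into $E$ by every $s \in S$.  These are cosmetic differences.

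For the compactness claim your approach is genuinely different from the paper's.  The paper builds an explicit compact set containing $STx$: for each $y \in \overline{Tx}$ one chooses an entourage $E_y$ so that the $E_y$-thickening of $\overline{Sy}$ has compact closure, together with a neighbourhood $U_y$ of $y$ on which every $s \in S$ moves points at most $E_y$-far from $sy$; a finite subcover of $\overline{Tx}$ by the $U_y$ then exhibits $STx$ inside a finite union of sets with compact closure.  Your net argument instead reuses equicontinuity of $S$ (now at the limit point $y \in \overline{Tx}$) to show directly that every net in $STx$ has a convergent subnet in $X$, which in a locally compact Hausdorff space forces $\overline{STx}$ to be compact.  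The paper's version is more constructive and sidesteps the (standard, but not completely trivial) step from ``every net in $A$ clusters in $X$'' to ``$\overline{A}$ is compact''; yours is shorter and makes the role of equicontinuity in the compactness conclusion more transparent.
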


\begin{proof}
(i)
Let $E$ be an entourage.  Since $S$ and $T$ are equicontinuous at $x$, there are neighbourhoods $V_1$ and $V_2$ of $x$ such that $(sx,sy) \in E$ for all $s \in S$ and $y \in V_1$, and also for all $s \in T$ and $y \in V_2$.  Thus $(sx,sy) \in E$ for all $s \in S \cup T$ and $y \in V_1 \cap V_2$.

(ii)
Let $E$ be an entourage and let $z \in \overline{Tx}$.  Since $S$ is equicontinuous at $z$, there is an open neighbourhood $V_z$ of $z$ with compact closure such that $(sy,sw) \in E$ for all $s \in S$ and $y,w \in V_z$.  In turn there is a smaller open neighbourhood $W_z$ of $z$ and an entourage $E_z$ such that
\[
\forall y \in X \; \forall w \in W_z: \; (y,w) \in E_z \Rightarrow y \in V_z.
\]
Since $\overline{Tx}$ is compact, we can choose finitely many points $z_1,\dots,z_n$ in $\overline{Tx}$ such that $\{W_{z_1},\dots,W_{z_n}\}$ covers $\overline{Tx}$, and obtain an entourage $E' = \bigcap^n_{i=1}E_{z_i}$.  Since $T$ is equicontinuous at $x$, there is a neighbourhood $W$ of $x$ such that $(tx,ty) \in E'$ for all $y \in W$.  Given $t \in T$ and $y \in W$, then $ty \in W_{z_i}$ for some $i$, so $tx \in V_{z_i}$, and hence $(stx,sty) \in E$ for all $s \in S$.  Thus $ST$ is equicontinuous at $x$.

Now suppose in addition that $\overline{Sy}$ is compact for every $y \in \overline{Tx}$.  For each $y \in \overline{Tx}$ choose an entourage $E_y$ such that the set $K_y := \{(z,w) \in E_y \mid z \in \overline{Sy}, w \in X\}$ has compact closure, and let $U_y$ be a neighbourhood of $y$ such that for all $s \in S$, $z \in U_y \Rightarrow (sy,sz) \in E_y$.  Since $\overline{Tx}$ is compact, we have $\overline{Tx} \subseteq \bigcup_{y \in F}U_y$ for a finite subset $F$ of $\overline{Sy}$.  It then follows that $STx \subseteq  \bigcup_{y \in F}K_y$, so $STx$ has compact closure.
\end{proof}

We note the following application of the Arzel\`{a}--Ascoli theorem.

\begin{lem}\label{lem:equi_clopen}
Let $X$ be a locally compact Hausdorff topological space, let $S$ be an equicontinuous set of homeomorphisms of $X$ with respect to some compatible uniformity, and let $U$ be a compact open subset of $X$.
\begin{enumerate}[(i)]
\item The set $V:= \bigcap_{s \in S}s\inv U$ is compact open.
\item If both $\overline{S\inv U}$ and $\overline{SS\inv U}$ are compact, then $\{s\inv U \mid s \in S\}$ is finite.
\end{enumerate}
\end{lem}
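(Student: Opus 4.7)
For part (i), my plan is to produce a symmetric entourage $E$ satisfying $(x,y) \in E \Rightarrow (x \in U \Leftrightarrow y \in U)$, then use equicontinuity of $S$ to deduce openness of $V$. Since $U$ is compact and open in the Hausdorff space $X$, it is clopen; the standard separation property of uniform spaces (disjoint compact and closed sets are separated by some entourage) applied to $U$ and $X \smallsetminus U$ yields an entourage $E_1$ with $E_1[U] \subseteq U$, and $E := E_1 \cap E_1^{-1}$ has the required property. For $x \in V$, equicontinuity of $S$ at $x$ produces a neighbourhood $W$ of $x$ with $(sx, sy) \in E$ for all $s \in S$ and $y \in W$; since $sx \in U$, we obtain $sy \in U$, so $y \in V$. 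Compactness of $V$ is then immediate: $V$ is closed (an intersection of closed sets) and contained in the compact set $s^{-1}U$ for any $s \in S$.

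For part (ii), the plan is to apply the Arzel\`{a}--Ascoli theorem to the restricted family $\{s|_K : K \to L\}_{s \in S}$, where $K := \overline{S^{-1}U}$ and $L := \overline{SS^{-1}U}$ are compact by hypothesis. Continuity of each $s$ ensures $s(K) \subseteq L$; the family is equicontinuous (inherited from $S$) and takes values in the compact target $L$, so Arzel\`{a}--Ascoli yields relative compactness of $\{s|_K\}$ in $C(K, L)$ under the topology of uniform convergence. To finish, I invoke the entourage $E$ from part (i): if two maps $f, g : K \to L$ are uniformly $E$-close, then $f^{-1}(U) \cap K = g^{-1}(U) \cap K$. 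Consequently the assignment $s|_K \mapsto s^{-1}U \cap K$ is locally constant on $C(K, L)$, so it takes only finitely many values on the relatively compact family $\{s|_K\}_{s \in S}$. Since $s^{-1}U \subseteq K$, we have $s^{-1}U \cap K = s^{-1}U$, and we conclude that $\{s^{-1}U : s \in S\}$ is finite.

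The main obstacle I expect is arranging the Arzel\`{a}--Ascoli hypotheses with a compact domain and a compact target, which is precisely why the two compactness conditions on $\overline{S^{-1}U}$ and $\overline{SS^{-1}U}$ both appear. Once relative compactness of $\{s|_K\}$ is in hand, the deduction that only finitely many preimages $s^{-1}U$ can arise is a routine consequence of the clopenness of $U$: the clopen data can be detected by a single entourage, making the indicator-type map from the function space to the clopen subsets of $K$ locally constant and hence finite-valued on sets of compact closure.
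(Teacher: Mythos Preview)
Your proof is correct and follows essentially the same approach as the paper: for (i), both arguments produce an entourage $E$ with $E[U]\subseteq U$ (you invoke the standard compact/closed separation property while the paper builds it by a finite-cover argument) and then apply equicontinuity; for (ii), both apply Arzel\`{a}--Ascoli to the restrictions $s|_{\overline{S^{-1}U}}$ and then observe that passage to the $U$-indicator (equivalently, your locally constant map $f\mapsto f^{-1}(U)$) takes only finitely many values on a relatively compact family. The only cosmetic difference is that the paper works in $C(\overline{S^{-1}U},X)$ with ``uniform boundedness'' supplied by compactness of $\overline{SS^{-1}U}$, whereas you restrict the codomain to $L=\overline{SS^{-1}U}$ explicitly.
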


\begin{proof}
(i)
We first show there is an entourage $E$ such that
$$x \in U, (x,y) \in E \Rightarrow y \in U.$$
For each $x \in U$, choose $E_x$ to be an entourage such that $(x,y) \in E_x \Rightarrow y \in U$ (this exists since $U$ is open), let $E'_x$ be an entourage such that $(a,b),(b,c) \in E'_x$ implies $(a,c) \in E_x$, and let $U_x$ be the right $E'_x$-neighbourhood of $x$.  Since $U$ is compact, it has a finite cover $\{U_{x_1},\dots,U_{x_n}\}$, so $x \in U$ implies $x \in U_{x_i}$ for some $i$, and hence $(x_i,x) \in E'_{x_i}$.  Now let $E$ be an entourage contained in $\bigcap^n_{i=1}E'_{x_i}$.  Given $(x,y) \in E$ such that $x \in U$, then $(x_i,x) \in E'_{x_i}$ for some $i$ and also $(x,y) \in E'_{x_i}$ for all $i$, so $(x_i,y) \in E_{x_i}$ and hence $y \in U$.

Now let $x \in V$.  Since $S$ is equicontinuous, there is a neighbourhood $W$ of $x$ such that $(sx,sy) \in E$ for all $s \in S$ and $y \in W$.  Since $x \in V$ we have $sx \in U$ for all $s \in S$; the construction of $E$ then ensures $sy \in U$.  We conclude that $V$ contains the neighbourhood $W$ of $x$.  Since $x \in V$ was arbitrary, we have proved that $V$ is open.

To see that $V$ is compact, note that $s\inv U$ is compact for every $s \in S$, since $s\inv$ is a homeomorphism; thus $V$ is an intersection of compact sets.

(ii)
Let $Y = \overline{S\inv U}$ and let $C(Y,X)$ be the space of continuous functions from $Y$ to $X$ with the topology of uniform convergence.  By restricting the domain, let us regard $S$ as a collection of functions from $Y$ to $X$.  Then this family of functions is equicontinuous and uniformly bounded.  Thus by the Arzel\`{a}--Ascoli theorem (see \cite[Theorem~43.15]{Willard}), $S$ has compact closure in $C(Y,X)$.  Since $U$ is clopen in $X$, we then have a continuous map from $C(Y,X)$ to $C(Y,\{0,1\})$, given by $f \mapsto 1_{U} \circ f$.  Note moreover that $C(Y,\{0,1\})$ carries the discrete topology, as any uniformly convergent sequence of functions to $\{0,1\}$ is eventually constant.  Thus the set $\{1_{U} \circ s \mid s \in S\}$ is finite.  The function $1_{U} \circ s$ determines the subset $s\inv U$ of $Y$, so the set $\{s\inv U \mid s \in S\}$ is also finite.
\end{proof}

In particular, we will use a special case Lemma~\ref{lem:equi_clopen} in the proof of Theorem~\ref{thmintro:vinfty}.

\begin{cor}\label{cor:equi_clopen}
Let $X$ be a locally compact zero-dimensional space, let $S \subseteq \Homeo(X)$ with $1 \in S$ and $S = S\inv$, such that $S$ is equicontinuous with respect to some compatible uniformity, and let $G = \langle S \rangle$.  Let $V$ be a compact open subset of $X$ and let $V_0 = \bigcap_{s \in S}s\inv V $.  Define
\[
\text{For } n \in \bN \cup \{\infty\}: V_n = \bigcap_{g \in S^n}gV_0; \text{ for } n \in \bN: W_n = V_n \smallsetminus V_{n+1}.
\]
Then for all $n \in \bN$, $V_n$ is compact open, and for all $0 \le m \le n$, the set $\{gV_n \mid g \in S^m\}$ is finite.
\end{cor}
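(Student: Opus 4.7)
My plan is to run a double induction: on $n$ for the compact-openness statement, applying Lemma~\ref{lem:equi_clopen}(i); and for each fixed $n$, on $m \le n$ for the finiteness statement, applying Lemma~\ref{lem:equi_clopen}(ii). Both parts rest on one preliminary reformulation:
\[
V_n \;=\; \bigcap_{g \in S^{n+1}} gV,
\]
obtained by unrolling the definition of $V_0$ inside the outer intersection and using $S = S^{-1}$ (so that $S^n \cdot S^{-1} = S^{n+1}$). This identity immediately yields the uniform bound $S^{n+1}V_n \subseteq V$, which is what lets me verify the compactness hypotheses needed to apply Lemma~\ref{lem:equi_clopen}(ii) in every case below.

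The compact-openness induction is then straightforward: the base $n = 0$ is Lemma~\ref{lem:equi_clopen}(i) applied to the compact open set $V$, and for the step I use the recursion $V_{n+1} = \bigcap_{s \in S}sV_n = \bigcap_{s \in S}s^{-1}V_n$ (again via $S = S^{-1}$) to feed the compact open set $V_n$ into Lemma~\ref{lem:equi_clopen}(i).

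For the finiteness part I fix $n$ and induct on $m$, the case $m = 0$ being trivial. Using $S^m = S \cdot S^{m-1}$, one has
\[
\{gV_n \mid g \in S^m\} \;=\; \bigcup_{h \in S^{m-1}} \{shV_n \mid s \in S\},
\]
and the outer index set is finite by the inductive hypothesis, so it suffices to show each inner set $\{shV_n \mid s \in S\}$ is finite. For this I apply Lemma~\ref{lem:equi_clopen}(ii) to the compact open set $U = hV_n$; using $S = S^{-1}$, the two compactness hypotheses reduce to $S^m V_n \subseteq V$ and $S^{m+1}V_n \subseteq V$, both instances of the preliminary bound since $m+1 \le n+1$. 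The lemma then gives $\{s^{-1}hV_n \mid s \in S\} = \{shV_n \mid s \in S\}$ finite, as required.

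The only step requiring real care is the preliminary reformulation and the accompanying bookkeeping of powers of $S$ to ensure the boundedness hypotheses line up correctly; everything else is a routine application of the two halves of Lemma~\ref{lem:equi_clopen}. I do not foresee a genuine obstacle.
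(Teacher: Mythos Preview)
Your proposal is correct and follows essentially the same approach as the paper: both arguments apply Lemma~\ref{lem:equi_clopen}(i) repeatedly for compact-openness and run an induction on $m$ using Lemma~\ref{lem:equi_clopen}(ii) for finiteness, with the key containment $S^{m+2}V_n \subseteq V$ (for $m < n$) supplying the compactness hypotheses. Your explicit preliminary identity $V_n = \bigcap_{g \in S^{n+1}} gV$ is a clean way to package the bookkeeping that the paper leaves implicit.
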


\begin{proof}
We see by repeated application of Lemma~\ref{lem:equi_clopen}(i) that $V_n$ is open; $V_n$ is compact since it is an intersection of compact sets.

We now claim that $\{gV_n \mid g \in S^m\}$ is finite for all $0 \le m \le n$, using induction on $m$; the case $m=0$ is clear.  Fix $g \in S^m$ for some $0 \le m < n$.  The set $S^2gV_n$ is contained in the compact set $V$, so $\overline{S^2 gV_n}$ is compact; similarly, $\overline{S gV_n}$ is compact.  We now apply Lemma~\ref{lem:equi_clopen}(ii) to $S$ and $g(V_n)$ (recall $S = S^{-1}$ in the present instance), to conclude that $\{s gV_n \mid s \in S\}$ is finite.  By the inductive hypothesis, $\{ gV_n \mid g \in S^m\}$ is finite, so we have proved $\{hV_n \mid h \in S^{m+1}\}$ is finite, completing the induction.
\end{proof}

The next lemma will finish the proof of Theorem~\ref{thmintro:vinfty}; we state the lemma separately in such a way that it can be proved from first principles.  Arguments similar to Lemma~\ref{lem:shell}(i) have appeared in a number of previous articles on totally disconnected, locally compact (\tdlc) groups (see for example the proofs of \cite[Proposition~2.5]{CM} and \cite[Lemma~6.17]{CRW}).

\begin{lem}\label{lem:shell}
Let $X$ be a locally compact zero-dimensional space.  Let $S \subseteq \Homeo(X)$ with $1 \in S$ and $S = S\inv$ and let $V_0$ be a compact open subset of $X$.  Define 
\[
\text{For } n \in \bN \cup \{\infty\}: V_n = \bigcap_{g \in S^n}gV_0; \text{ for } n \in \bN: W_n = V_n \smallsetminus V_{n+1}.
\]
Then the following holds:
\begin{enumerate}[(i)]
\item Given $n \ge 0$, every $G$-orbit intersecting $V_n \smallsetminus V_{\infty}$ also intersects $W_m$ for all $0 \le m \le n$.  In particular, if $W_n$ is nonempty, then so is $W_m$ for all $0 \le m \le n$.
\item If $P_n := (\bigcup_{g \in S^n}gV_n) \smallsetminus V_1$ is compact for all sufficiently large $n \in \bN$, then there is a $G$-orbit $Gx$ that intersects $W_m$ for all $m$ such that $W_m$ is nonempty.
\item Suppose $P_n$ is compact and $V_n$ is open for all sufficiently large $n \in \bN$.  Then $V_{\infty}$ is open if and only if $\overline{Gx} \cap V_{\infty} = \emptyset$ for all $x \in X \smallsetminus V_{\infty}$.
\end{enumerate}
\end{lem}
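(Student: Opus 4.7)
My plan is to handle (i)--(iii) in order, with (i) done by a combinatorial path argument, (ii) reduced to (i) plus a compactness/nesting argument on the $P_n$, and (iii) done by contrapositive using (ii).

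For (i), I will introduce a ``level'' function $\ell(y) = \sup\{k : y \in V_k\}$ on $X$. Note $\bigcap_k V_k = V_\infty$ (immediate from $\bigcup_n S^n = G$ and $S = S^{-1}$), and $V_\infty$ is $G$-invariant, so $\ell$ takes finite nonnegative values on $G(V_0 \smallsetminus V_\infty)$. The key property is $1$-Lipschitz behaviour along $S$-steps: since $S^{k-1} s \subseteq S^k$, $y \in V_k$ implies $sy \in V_{k-1}$, giving $|\ell(sy) - \ell(y)| \le 1$. Given $x \in V_n \smallsetminus V_\infty$ with $N = \ell(x) \ge n$, I will inductively construct an orbit path $x = x_0, x_1, \dots, x_N$ of length exactly $N$ with $\ell(x_k) = N - k$, choosing each $s_k \in S$ using that $x_{k-1} \notin V_{\ell(x_{k-1})+1}$ always provides a single step dropping the level by one. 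The orbit then meets $W_m$ for every $0 \le m \le N$, hence for every $0 \le m \le n$.

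For (ii), let $M = \sup\{m : W_m \ne \emptyset\}$. If $M < \infty$, any $x \in W_M \subseteq V_M \smallsetminus V_\infty$ gives the required orbit by (i) alone, and the compactness hypothesis on $P_n$ is not used. In the case $M = \infty$, I first produce points in $P_n$: applying the length-$n$ path to some $x_n \in W_n$ gives $y_n = h_n x_n \in W_0$ with $h_n \in S^n$, so $h_n^{-1} y_n = x_n \in V_n$ puts $y_n \in h_n V_n \subseteq \bigcup_{g \in S^n} g V_n$, and $y_n \notin V_1$ yields $y_n \in P_n$. Next I will establish the crucial nesting $P_{n+1} \subseteq P_n$. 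For this, I use $V_{n+1} \subseteq s V_n$ for every $s \in S$ (immediate from $s S^n \subseteq S^{n+1}$): if $y \in P_{n+1}$ has witness $h = s h' \in S \cdot S^n$ with $h y \in V_{n+1}$, then $h' y = s^{-1} h y \in s^{-1} V_{n+1} \subseteq V_n$, exhibiting $h'$ as an $S^n$-witness for $y \in P_n$. The family $(P_n)_{n \ge n_0}$ is thus a decreasing sequence of nonempty compact sets, and the finite intersection property gives some $y \in \bigcap_{n \ge n_0} P_n$. This $y$ lies outside $V_1$, hence outside $V_\infty$, and satisfies $Gy \cap V_n \ne \emptyset$ for every $n$; a second application of (i) then shows $Gy$ meets every $W_m$.

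For (iii), the forward direction is immediate, since $V_\infty$ is $G$-invariant and $X \smallsetminus V_\infty$ is then closed, absorbing any $\overline{Gx}$ that starts outside $V_\infty$. The converse I will argue by contrapositive. If $V_\infty$ is not open, some net $z_\alpha \to z$ satisfies $z \in V_\infty$ and $z_\alpha \notin V_\infty$; since $V_n$ is open and contains $z$ for $n \ge n_0$, the net eventually enters $V_n \smallsetminus V_\infty = \bigcup_{m \ge n} W_m$, forcing $W_m$ nonempty for all $m$. By (ii) there exists $y \notin V_\infty$ with $Gy$ meeting every $V_n$; pick $w_n \in Gy \cap V_n$ and extract a subnet convergent to some $w$ in the compact set $V_{n_0}$. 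Because each $V_n$ with $n \ge n_0$ is closed and eventually contains the subnet, $w \in \bigcap_n V_n = V_\infty$, while $w \in \overline{Gy}$ contradicts the hypothesis. The main obstacle will be the nesting $P_{n+1} \subseteq P_n$: without it the separate compactness of each $P_n$ does not obviously combine into a single limit, since the sets could a priori be pairwise far apart and there is no topology on $G$ available. Once nesting is in hand, everything else is inductive path-building on the level function plus standard compactness arguments.
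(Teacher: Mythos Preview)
Your proposal is correct and follows essentially the same route as the paper. The level function $\ell$ is just a convenient repackaging of the paper's inductive step (from $W_{n'}$ to $W_{n'-1}$ via a single $s\in S$), your nesting $P_{n+1}\subseteq P_n$ is exactly the paper's ``$(P_n)$ is descending'' argument, and your contrapositive for (iii) matches the paper's accumulation-point argument; the only cosmetic differences are that you split the finite/infinite cases in (ii) explicitly and exhibit points of $P_n$ directly via the path from (i), whereas the paper shows $P_n\neq\emptyset$ by observing $P_n=\emptyset\Rightarrow W_n=\emptyset$.
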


\begin{proof}
(i)
Let $x \in V_n \smallsetminus V_{\infty}$.  Then $x \in W_{n'}$ for some $n' \ge n$.  Since $x \not\in V_{n'+1}$ there exists $g \in S$ such that $gx \not\in V_{n'}$, and since $x \in V_{n'}$ we have $gx \in V_{n'-1}$.  Thus $gx \in W_{n'-1}$.

By iterating this argument, for all $0 \le m \le n'$ we obtain $h_m \in G$ such that $h_mx \in W_m$.

(ii)
By hypothesis, there exists $b$ such that $P_n$ is compact for all $n \ge b$.  Let $I$ be the set of $n \ge b$ such that $W_n \neq \emptyset$.  Any $G$-orbit $Gx$ intersecting $\bigcap_{n \in I}P_n$ will clearly intersect $V_n \smallsetminus V_{\infty}$ for all $n \in I$; by part (i), $Gx$ then intersects every nonempty $W_n$.  Thus it is enough to show $\bigcap_{n \in I}P_n \neq \emptyset$.

Suppose $x \in P_n$ for $n \ge b$.  Then $\exists g \in S, h \in S^{n-1}: ghx \in V_n$, so $hx \in V_{n-1}$ and hence $x \in P_{n-1}$.  Thus $(P_n)_{n \in I}$ is a descending sequence.

Now suppose $\bigcap_{n \in I}P_n = \emptyset$.  Then by compactness $P_n = \emptyset$ for some $n \in I$, that is, $gV_n \subseteq V_1$ for all $g \in S^n$.  But then $V_n \subseteq \bigcap_{g \in S^n}gV_1 = V_{n+1}$, so $W_n = \emptyset$, contradicting the choice of $n$.  This contradiction completes the proof of (ii).

(iii)
We see that $V_{\infty}$ is a $G$-invariant set; thus if $V_{\infty}$ is open, then certainly $\overline{Gx} \cap V_{\infty} = \emptyset$ for all $x \in X \smallsetminus V_{\infty}$.  On the other hand if $V_{\infty}$ is not open, then it is properly contained in infinitely many of the open sets $V_n$; consequently, infinitely many of the sets $W_n$ are nonempty, and by part (i) it follows that $W_n$ is nonempty for all $n \in \bN$.  Letting $Gx$ be as in part (ii), we see that $x \not\in V_{\infty}$, and there are $g_n \in G$ such that $g_nx \in W_n$ for all $n \in \bN$.  The sequence $(g_nx)_{n \in \bN}$ is confined to the compact set $V_0$, so it has some accumulation point $y$.  Since each of the sets $V_n$ is closed, we see that $y \in \bigcap_{n \in \bN}V_n = V_{\infty}$; thus the intersection $\overline{Gx} \cap V_{\infty}$ is nonempty.
\end{proof}

\begin{proof}[Proof of Theorem~\ref{thmintro:vinfty}]
Take a generating set $S$ for $G$, such that $1 \in S$, $S = S\inv$ and $S$ is equicontinuous with respect to some compatible uniformity.  

Since $V_{\infty}$ is compact and $V$ is locally compact zero-dimensional, there is a compact open subset $W$ of $V$ such that $V_{\infty} \subseteq W$.  We then have $V_{\infty} = \bigcap_{g \in G}gW$.  Now let $V_0 = \bigcap_{s \in S}sW$.  By Corollary~\ref{cor:equi_clopen}, the sets $P_n$, $V_n$ and $V_{\infty}$ as defined in Lemma~\ref{lem:shell} satisfy the hypotheses of Lemma~\ref{lem:shell}(iii): specifically, $V_n$ is open for all $n$, and $P_n = (\bigcup_{g \in S^n}gV_n) \smallsetminus V_1$ is compact since it is a finite union of compact sets minus an open set.  Thus $V_{\infty}$ is open if and only if $\overline{Gx} \cap V_{\infty}$ is empty for every $x \in X \smallsetminus V_{\infty}$, as claimed.
\end{proof}

\section{Recurrence for equicontinuously generated actions}\label{sec:recurrence}

\begin{defn}
Let $G$ be a group, let $X$ be a Hausdorff topological space and let $G$ act on $X$ via the homomorphism $\theta: G \rightarrow \Homeo(X)$.  A \defbold{proximal pair} for the action is a pair $(x,y) \in X \times X$ such that, for some $z \in X$, we have $(z,z) \in \overline{\{(\theta(g)x,\theta(g)y) \mid g \in G\}}$.  An element $x \in X$ is a \defbold{distal point} if, whenever $(x,y)$ is a proximal pair, then $y = x$.  The action is \defbold{distal} if every point is a distal point.

We equip the space $X^X$ of functions from $X$ to $X$ with the topology of pointwise convergence.  The \defbold{enveloping semigroup} $T$ of the action $\theta$ of $G$ on $X$ is the closure of $\theta(G)$ in $X$.
\end{defn}

It is straightforward to verify that $T^2 \subseteq \overline{GT} \subseteq T$, so the enveloping semigroup $T$ is indeed a semigroup under composition; if in addition, $\overline{Gx}$ is compact for every $x \in X$, then $T$ is compact by Tychonoff's theorem.  However in general, $T$ is not a group.

\begin{thm}[Ellis \cite{Ellis}]\label{thm:ellis}
Let $G$ be a group of homeomorphisms of a Hausdorff space $X$ such that $\overline{Gx}$ is compact for every $x \in X$.  Then the following are equivalent:
\begin{enumerate}[(i)]
\item The closure of $G$ in $X^X$ is a group.
\item For every cardinal $\kappa > 0$, $G$ is pointwise almost periodic on $X^\kappa$.
\item There exists a cardinal $\kappa \ge 2$ such that $G$ is pointwise almost periodic on $X^\kappa$.
\item $G$ is distal.
\end{enumerate}
\end{thm}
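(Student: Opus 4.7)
The plan is to work with the enveloping semigroup $T$, defined as the closure of $\theta(G)$ in $X^{X}$. The hypothesis that $\overline{Gx}$ is compact for every $x$ combined with Tychonoff's theorem makes $T$ compact, and composition of functions makes $T$ a right-topological semigroup, since pointwise convergence $s_i \to s$ gives $s_i \circ t \to s \circ t$ pointwise for each fixed $t$. The central tool will be the Ellis--Numakura lemma: every nonempty compact right-topological semigroup, and indeed any closed sub-semigroup of such, contains an idempotent.

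For (i) $\Rightarrow$ (iv): if $T$ is a group and $(x,y)$ is a proximal pair, then some $t \in T$ sends both to a common point, so applying $t\inv \in T$ gives $x = y$. For (iv) $\Rightarrow$ (i), I would first show that the only idempotent in $T$ is $\id$. Given $e = e \circ e \in T$, pick a net $g_i \in G$ with $g_i \to e$ pointwise; then $(g_i(x), g_i(e(x))) \to (e(x), e(e(x))) = (e(x), e(x))$, so $(x, e(x))$ is proximal, and distality forces $e(x) = x$. I would then invoke the structural theory of compact right-topological semigroups: $T$ has a minimal right ideal $R$, which is closed and contains an idempotent (by Ellis--Numakura applied to $R$), hence contains $\id$, hence equals $T$. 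So $tT = T$ for every $t \in T$, giving each $t$ a right inverse; a parallel argument using minimal left ideals, together with uniqueness of the idempotent, upgrades this to two-sided inverses and makes $T$ a group.

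For (i) $\Rightarrow$ (ii): the diagonal action of $G$ on $X^{\kappa}$ has the same enveloping semigroup $T$ acting coordinatewise, and the orbit closure of $y = (y_\alpha)_\alpha$ equals $Ty$, since $g_i y \to z$ in the product topology is exactly $g_i(y_\alpha) \to z_\alpha$ for each $\alpha$. If $T$ is a group, then for $z = ty \in Ty$ we have $y = t\inv z \in Tz$, so $Ty = Tz$; the orbit closure is therefore minimal, and on a compact orbit closure this is equivalent to pointwise almost periodicity. (ii) $\Rightarrow$ (iii) is trivial. For (iii) $\Rightarrow$ (iv): given a proximal pair $(x,y)$ with $x \ne y$, view $(x,y)$ as a point of $X^{\kappa}$ by padding with an arbitrary fixed coordinate. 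By (iii) its orbit closure is minimal; proximality places some diagonal element $(z,z)$ into the orbit closure, and minimality then forces the entire orbit closure to lie on the diagonal, contradicting the presence of $(x,y)$ there.

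The main obstacle is the step from ``$\id$ is the unique idempotent'' to ``$T$ is a group'' inside (iv) $\Rightarrow$ (i): this is not a direct algebraic manipulation but relies on the theory of minimal ideals in compact right-topological semigroups, which is the subtle ingredient of Ellis's original argument. The remaining implications follow smoothly once the enveloping-semigroup formalism and the correspondence between proximality and idempotents are in place.
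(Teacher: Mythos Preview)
The paper does not prove Theorem~\ref{thm:ellis}; it is quoted as a classical result of Ellis with a citation to \cite{Ellis} and used as a black box later in Lemma~\ref{lem:distal_orbit_closure} and Corollary~\ref{cor:distal_to_equi}. So there is no proof in the paper to compare against.

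That said, your outline is correct and is essentially Ellis's original enveloping-semigroup argument. One small simplification in (iv)~$\Rightarrow$~(i): you do not need a ``parallel argument using minimal left ideals'' (which is a bit delicate, since left multiplication need not be continuous in a right-topological semigroup, so minimal left ideals are not obviously closed and Ellis--Numakura does not apply to them directly). Once you have $tT=T$ for every $t\in T$, pick $s$ with $ts=\id$; then $(st)^2 = s(ts)t = st$ is an idempotent, hence $st=\id$ by uniqueness, and $T$ is a group. In (iii)~$\Rightarrow$~(iv) you should also note that the padding coordinates move under the diagonal action, but after passing to a convergent subnet this still produces a point whose first two coordinates agree, and the diagonal in those coordinates is $G$-invariant and closed; minimality then gives the contradiction exactly as you describe.
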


Distality also implies a certain orbit closure property, and almost periodic points in a locally compact Hausdorff space can be characterized in terms of orbit closures.

\begin{defn}
Let $G$ be a group acting on a topological space $X$.  The \defbold{orbit closure relation} $R$ for the action is given by $(x,y) \in R$ if $y \in \overline{Gx}$.

An action of a group $G$ on a topological space $X$ is \defbold{minimal} if every orbit is dense.  More generally, the action has \defbold{minimal orbit closures} if $G$ acts minimally on $\overline{Gx}$ for every $x \in X$; equivalently, the orbit closure relation $R$ is symmetric.
\end{defn}

\begin{lem}\label{lem:distal_orbit_closure}
Let $H$ be a group acting by homeomorphisms on a Hausdorff topological space $X$.  Let $x,y \in X$ be such that $\overline{Hx}$ is compact and consists of distal points for the action.  Then $H$ acts minimally on $\overline{Hx}$.  Indeed, we have $\overline{Hy} = \overline{Hx}$ whenever $y \in X$ is such that $\overline{Hx} \cap \overline{Hy} \neq \emptyset$.
\end{lem}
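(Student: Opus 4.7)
My plan is to apply Theorem~\ref{thm:ellis} to the restricted action of $H$ on $\overline{Hx}$ and then exploit the compact group structure of the resulting enveloping semigroup. A pair $(u,v) \in \overline{Hx} \times \overline{Hx}$ is proximal for the restricted action exactly when it is proximal for the $X$-action, since $\overline{Hx} \times \overline{Hx}$ is closed in $X \times X$; the hypothesis that every point of $\overline{Hx}$ is distal in $X$ then makes the restricted action distal. Each $\overline{Hw}$ with $w \in \overline{Hx}$ is closed in the compact space $\overline{Hx}$ and hence compact, so Theorem~\ref{thm:ellis} applies, and the enveloping semigroup $T_0 := \overline{H|_{\overline{Hx}}}$, computed inside $\overline{Hx}^{\overline{Hx}}$, is a compact group under composition.

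Minimality of $H$ on $\overline{Hx}$ then follows: for each $w \in \overline{Hx}$, the set $T_0 w$ is closed (image of compact $T_0$ under continuous evaluation), contains $Hw$, and lies in $\overline{Hw}$ (elements of $T_0$ being pointwise limits of elements of $H$), so $T_0 w = \overline{Hw}$. In particular $T_0 x = \overline{Hx}$; for arbitrary $w \in \overline{Hx}$, write $w = tx$ with $t \in T_0$ and use the group structure to get $\overline{Hw} = T_0 w = T_0 t x = T_0 x = \overline{Hx}$. Consequently, given $z \in \overline{Hx} \cap \overline{Hy}$, minimality yields $\overline{Hz} = \overline{Hx}$, so $z \in \overline{Hy}$ gives $\overline{Hx} \subseteq \overline{Hy}$; by $H$-invariance of $\overline{Hx}$, the reverse inclusion reduces to showing $y \in \overline{Hx}$.

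To prove $y \in \overline{Hx}$, pick a net $(h_\alpha)$ in $H$ with $h_\alpha y \to z$ and refine so that $h_\alpha|_{\overline{Hx}} \to s$ for some $s \in T_0$. Since $s^{-1} \in T_0$, choose a net $(g_\beta)$ in $H$ with $g_\beta|_{\overline{Hx}} \to s^{-1}$. For each fixed $\beta$, continuity of the homeomorphism $g_\beta$ gives $g_\beta h_\alpha y \to g_\beta z$ in $X$ and $g_\beta h_\alpha|_{\overline{Hx}} \to g_\beta \circ s$ pointwise on $\overline{Hx}$ as $\alpha$ grows; as $\beta$ then grows, $g_\beta z \to s^{-1} z$ and $g_\beta \circ s \to s^{-1} s = e$. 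A standard diagonal argument over the product directed set yields a single net $(k_\gamma)$ in $H$ with $k_\gamma y \to s^{-1} z$ in $X$ and $k_\gamma|_{\overline{Hx}} \to e$ in $T_0$ simultaneously. Evaluating the latter at $s^{-1} z \in \overline{Hx}$ gives $k_\gamma(s^{-1} z) \to s^{-1} z$, and combined with $k_\gamma y \to s^{-1} z$ this exhibits $(y, s^{-1} z)$ as a proximal pair for the $X$-action. Distality of $s^{-1} z \in \overline{Hx}$ forces $y = s^{-1} z \in \overline{Hx}$.

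The crux is the twist in the last paragraph: the naive attempt to show $(y, z)$ itself is a proximal pair breaks down when $s$ does not fix $z$, and no manipulation confined to $H$ can rescue it, since $(z, sz)$ with $sz \neq z$ would contradict distality on $\overline{Hx}$. The fix is to absorb the unwanted $s$ by left-composing with approximants to $s^{-1}$, which shifts the target from $z$ to $s^{-1} z$; this still lies in $\overline{Hx}$ and is therefore still a distal point of $X$, so the proximality argument closes there. Beyond this conceptual step, the only technicality is the routine directed-set diagonal extraction.
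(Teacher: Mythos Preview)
Your argument is correct and shares the paper's core strategy: restrict to $C := \overline{Hx}$, invoke Ellis's theorem so that the enveloping semigroup $T_0$ of the restricted action is a compact group, and then exhibit $(y,\,s^{-1}z)$ as a proximal pair in $X$ to force $y = s^{-1}z \in C$.  The paper reaches that proximal pair more directly, however.  Since $h_\alpha|_C \to s$ pointwise and $s^{-1}z \in C$, one has $h_\alpha(s^{-1}z) \to s(s^{-1}z) = z$ immediately, while $h_\alpha y \to z$ by choice of $(h_\alpha)$; thus the \emph{original} net already witnesses the proximality of $(y,\,s^{-1}z)$, and your composition with approximants to $s^{-1}$ and the subsequent diagonal extraction are unnecessary.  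The paper likewise skips your separate minimality argument: once $\overline{Hy} = \overline{Hx}$ is established for every $y$ with $\overline{Hx} \cap \overline{Hy} \neq \emptyset$, minimality on $C$ follows as the special case $y \in C$.
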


\begin{proof}
Let $C = \overline{Hx}$ and let $T$ be the enveloping semigroup of the action of $H$ on $C$.  By Theorem~\ref{thm:ellis}, $T$ is a group; it is also compact, since $C^C$ is compact.  Let $(h_i)$ be a net in $H$ such that $(h_i(y))$ converges to $c \in C$.  By passing to a subnet we may assume that the image of $(h_i)$ in $T$ converges to some limit $t \in T$.  Letting $c' = t\inv(c)$, then $h_i(c')$ converges to $t(t\inv(c)) = c$.  In particular, $(y,c')$ is a proximal pair for the action of $H$.  Since $C$ consists of distal points, we must have $y \in C$, and thus $\overline{Hy} \subseteq \overline{Hx}$.  In particular, $\overline{Hy}$ is compact and consists of distal points for the action, so the same argument applied to $x$ implies that in fact $\overline{Hy} = \overline{Hx}$.
\end{proof}

\begin{lem}[{See also \textit{e.g.} \cite[p31]{GH}}]\label{lem:almost_periodic}
Let $X$ be a locally compact Hausdorff space, let $G \le \Homeo(X)$ and let $x \in X$.  Then the following are equivalent:
\begin{enumerate}[(i)]
\item $\overline{Gx}$ is compact and $G$ acts minimally on $\overline{Gx}$;
\item $x$ is an almost periodic point for $G$.
\end{enumerate}
\end{lem}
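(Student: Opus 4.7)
The plan is to prove the two implications separately. The (i)$\Rightarrow$(ii) direction is a standard compactness argument, and (ii)$\Rightarrow$(i) splits into proving compactness of $\overline{Gx}$ and proving minimality; the latter uses regularity of the space in a pass-to-subnet argument.

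For (i)$\Rightarrow$(ii), let $U$ be an open neighbourhood of $x$ and set $N = \{h \in G \mid hx \in U\}$. By minimality of $G$ on $\overline{Gx}$, we have $x \in \overline{Gy}$ for every $y \in \overline{Gx}$, so for each such $y$ there is $g \in G$ with $gy \in U$, that is, $y \in g^{-1}U$. Hence $\{g^{-1}U \mid g \in G\}$ is an open cover of $\overline{Gx}$, and by compactness there is a finite $F \subseteq G$ with $\overline{Gx} \subseteq F^{-1}U$. For any $h \in G$, $hx \in f^{-1}U$ for some $f \in F$, so $fh \in N$ and $h \in F^{-1}N$. Thus $F^{-1}N = G$, and almost periodicity follows.

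For (ii)$\Rightarrow$(i), I first handle compactness: choose an open neighbourhood $U$ of $x$ with compact closure (possible by local compactness), let $N = \{h \in G \mid hx \in U\}$, and take finite $F \subseteq G$ with $FN = G$. For any $h = fn \in FN$ we have $hx = fnx \in f\overline{U}$, so $Gx \subseteq \bigcup_{f \in F} f\overline{U}$, a finite union of compact sets. Hence $\overline{Gx}$ is compact. For minimality, fix $y \in \overline{Gx}$; it suffices to show $x \in \overline{Gy}$, since then $\overline{Gy}$ is a $G$-invariant closed set containing $x$ hence containing $\overline{Gx}$, while the reverse inclusion is automatic. Given an open neighbourhood $U$ of $x$, use regularity of the locally compact Hausdorff space $X$ to choose open $V$ with $x \in V \subseteq \overline{V} \subseteq U$. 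Let $N = \{h \in G \mid hx \in V\}$ and pick finite $F$ with $FN = G$. Take a net $(g_i x)_{i \in I}$ converging to $y$, and write $g_i = f_i n_i$ with $f_i \in F$, $n_i \in N$. Since $F$ is finite, a subnet has $f_i = f$ constant; then $n_i x = f^{-1} g_i x \to f^{-1} y$. Each $n_i x$ lies in $V$, so $f^{-1} y \in \overline{V} \subseteq U$, exhibiting an element of $Gy$ inside $U$.

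The main obstacle is the minimality step in (ii)$\Rightarrow$(i): the almost periodicity hypothesis is phrased purely in terms of the orbit of $x$, yet we need to conclude that every $y \in \overline{Gx}$ has $x$ in its orbit closure. The trick is the shrinking-neighbourhood device (replace $U$ by $V$ with $\overline{V} \subseteq U$) which allows us to take limits of net elements confined to $V$ and still land in $U$. The other steps — finite covering of $\overline{Gx}$ in one direction, finite covering of $Gx$ by translates of a compact set in the other — are routine compactness arguments.
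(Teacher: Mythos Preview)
Your proof is correct. The (i)$\Rightarrow$(ii) direction is essentially identical to the paper's: cover $\overline{Gx}$ by translates of the given neighbourhood and extract a finite subcover.

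In the (ii)$\Rightarrow$(i) direction you take a genuinely different route from the paper. You treat compactness and minimality separately, and for minimality you argue pointwise: given $y \in \overline{Gx}$ and a neighbourhood $U$ of $x$, you shrink to $V$ with $\overline{V} \subseteq U$, decompose an approximating net $g_i x \to y$ as $g_i = f_i n_i$, pass to a subnet with $f_i = f$ constant, and conclude $f^{-1}y \in \overline{V} \subseteq U$. The paper instead works with the characterization of minimality as ``no proper nonempty open $G$-invariant subset'': given such a $Z \subseteq Y = \overline{Gx}$, density forces $x \in Z$, one chooses a compact neighbourhood $V$ with $V \cap Y \subseteq Z$, and then $G = FN$ gives $Gx \subseteq F(V \cap Y)$, a compact set that is therefore all of $Y$; this yields $Y = Z$ and compactness of $Y$ in one stroke. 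The paper's argument is a bit slicker in that it avoids nets and the regularity step and handles both conclusions simultaneously; your argument is perhaps more transparent about \emph{why} each individual point $y$ has $x$ in its orbit closure, and makes explicit use of the orbit-closure formulation of minimality. Both are short and self-contained.
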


\begin{proof}
Let $Y = \overline{Gx}$.

Suppose that $Y$ is compact and that $G$ acts minimally on $Y$, let $V$ be a neighbourhood of $x$ and let $N = \{h \in G \mid hx \in V\}$.  By minimality, $Y$ is covered by the $G$-translates of $V$; since $Y$ is compact, we have $Y \subseteq \bigcup^n_{i=1}g_iV$ for some $g_1,\dots,g_n \in G$.  Given $g \in G$, then $gx \in g_iV$ for some $1 \le i \le n$, so $g\inv_igx \in V$, that is, $g\inv_ig \in N$.  Thus $G = FN$ where $F = \{g_1,\dots,g_n\}$, showing that $x$ is an almost periodic point.

Conversely, suppose that $x$ is an almost periodic point.  Let $Z$ be an nonempty open $G$-invariant subset of $Y$; since $Gx$ is dense in $Y$, we have $Gx \cap Z \neq \emptyset$ and hence $x \in Z$.  Let $V$ be a compact neighbourhood of $x$ such that $V \cap Y \subseteq Z$.   Then $G = FN$ where $F$ is a finite set and $N = \{h \in G \mid hx \in V\}$.  Hence
\[
\forall g \in G \; \exists f \in F: \; gx \in fV \cap Y = f(V \cap Y);
\]
or in other words, $Gx \subseteq F(V \cap Y)$.  Since $F$ is finite, the set $F(V \cap Y)$ is compact and in particular, closed in $Y$.  Since $Gx$ is dense in $Y$, we conclude that $Y = F(V \cap Y) = Z$.  Thus $Y$ is compact and there is no proper nonempty open $G$-invariant subset of $Y$, that is, $G$ acts minimally on $Y$.
\end{proof}

Here is our substitute for \cite[Proposition~1.5]{AGW}.

\begin{lem}\label{lem:replete}
Let $X$ be a locally compact Hausdorff space, let $S \subseteq \Homeo(X)$ with $1 \in S$ and $S = S\inv$ and let $G = \langle S \rangle$.  Let $x \in X$ and let $F$ be a finite subset of $G$.  Then there is a positive integer $n$ such that for all $g \in G$ with $|g| \ge n$, there is $c \in G$ such that $|c|=n$ and $Fc \subseteq K(g)$.
\end{lem}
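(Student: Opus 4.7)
The statement is purely group-theoretic: neither the space $X$ nor the point $x$ enters the conclusion, so the proof reduces to a combinatorial fact about word length with respect to the symmetric generating set $S$. My plan is to set $n := 1 + \max_{f \in F}|f|$, and then, for each $g \in G$ with $|g|=m \ge n$, to take $c$ to be the ``tail of length $n$'' of a minimal-length word for $g$.

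Concretely, I would write $g = s_1 s_2 \cdots s_m$ with $s_i \in S$ and $|g|=m$, and set $c := s_{m-n+1}\cdots s_m$. The first step is to observe that $|c|=n$: any expression of $c$ as a product of fewer than $n$ elements of $S$, prefixed by $s_1 \cdots s_{m-n}$, would yield an expression for $g$ of length less than $m$, contradicting the minimality of the chosen word for $g$. This is the one point in the argument that requires a moment's thought (the bound $|c|\le n$ being immediate); once it is in hand, the rest is a direct verification.

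The second step is the telescoping identity $cg\inv = s_{m-n}\inv \cdots s_1\inv$, obtained by cancelling the terminal segment $s_{m-n+1}\cdots s_m$ of $c$ against the initial segment $s_m\inv \cdots s_{m-n+1}\inv$ of $g\inv$. Since $S = S\inv$, this yields $|cg\inv|\le m-n$, and hence for every $f \in F$,
\[
|fcg\inv| \le |f| + (m-n) \le (n-1)+(m-n) = |g|-1,
\]
so $fcg\inv \in S^{|g|-1}$ and $fc \in S^{|g|-1}g = K(g)$. As this holds for each $f \in F$, we obtain $Fc \subseteq K(g)$, as required. No substantial obstacle arises.
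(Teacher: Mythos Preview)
Your proof is correct and follows essentially the same approach as the paper: choose $n$ so that $F \subseteq S^{n-1}$, then for $|g|\ge n$ take $c$ to be the length-$n$ tail of a geodesic word for $g$, and bound $|fcg^{-1}|$ by $(n-1)+(|g|-n)=|g|-1$. The only difference is cosmetic: the paper simply asserts a factorization $g=hc$ with $|c|=n$ and $|g|=|h|+|c|$, whereas you construct it explicitly and verify that the tail of a geodesic is itself geodesic.
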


\begin{proof}
Since $1 \in S = S\inv$ and $S$ generates $G$, there is some $n$ such that $F \subseteq S^{n-1}$.  Suppose $|g| \ge n$, write $g$ as $g = hc$ where $|c| = n$ and $|g| = |h| + |c|$ and let $f \in F$.  Then 
\[
|fcg\inv| = |fh\inv| \le (n-1) + |h|  < |g|,
\]
so $fcg\inv \in S^{|g|-1}$ and hence $fc \in K(g)$.  Thus $Fc \subseteq K(g)$.
\end{proof}

Let us prove some implications in Theorem~\ref{thmintro:AGW_generalized} that hold in greater generality.

\begin{prop}\label{prop:AGW_any_gen}
Let $X$ be a locally compact Hausdorff space, let $G$ be a group of homeomorphisms of $X$, and let $R \subseteq X \times X$ be the $G$-orbit closure relation.
\begin{enumerate}[(i)]
\item\label{any_gen:1} Suppose that there is a $G$-equivariant quotient map $\phi: X \rightarrow Y$, where $Y$ is a locally compact Hausdorff space such that $G$ acts trivially on $Y$ and minimally on each fibre of $\phi$.  Then $R$ is closed.
\item\label{any_gen:2} If $R \cap (W \times W)$ is closed for some open $W \subseteq X$, then for all $x \in X$ and $y \in W$ such that $y \in \overline{Gx}$, we have $x \in \overline{Gy}$.
\item\label{any_gen:3} Let $U$ be an open subset of $X$, and write $U^*$ for the union of all compact $G$-invariant subsets of $U$.  Suppose that $X$ is zero-dimensional and that for every compact open subset $V$ of $U$, the intersection $\bigcap_{g \in G}gV$ is open.  Then $U^*$ is open in $X$ and consists of almost periodic points for $G$.
\item\label{any_gen:4} Let $S$ be a generating set for $G$ with $1 \in S$ and $S = S\inv$, and let $x \in X$ be almost periodic for $G$.  Then $x$ is $S$-recurrent.
\end{enumerate}
\end{prop}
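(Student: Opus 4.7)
The proposition is a compendium of four implications, and I would handle each in turn.

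For \eqref{any_gen:1}, $G$-equivariance together with the trivial action on $Y$ confines each orbit $Gx$ to the closed fibre $\phi^{-1}(\phi(x))$, so $\overline{Gx} \subseteq \phi^{-1}(\phi(x))$; minimality on fibres yields the reverse inclusion. Hence $R$ coincides with the fibre relation $\{(x,y) : \phi(x) = \phi(y)\}$, which is closed as the preimage of the diagonal of $Y$ under $\phi \times \phi$.

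For \eqref{any_gen:2}, the key move is to reduce to the case $x \in W$. Since $y \in W \cap \overline{Gx}$ and $W$ is open, there exists $g \in G$ with $gx \in W$; replacing $x$ by $gx$ preserves both hypothesis and conclusion (as $\overline{Gx} = \overline{G(gx)}$ and $\overline{Gy}$ is $G$-invariant). Once $x \in W$, take a net $(g_i)$ with $g_ix \to y$; the pairs $(g_ix, x)$ always lie in $R$ (since $x \in G(g_ix)$) and eventually in $W \times W$. Closedness of $R \cap (W \times W)$ places the limit $(y,x)$ in $R$, giving $x \in \overline{Gy}$.

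For \eqref{any_gen:3}, openness of $U^*$ follows from a zero-dimensional refinement: given $x \in U^*$ and a compact $G$-invariant $C$ with $x \in C \subseteq U$, choose a compact open $V$ with $C \subseteq V \subseteq U$; then $V_\infty := \bigcap_{g \in G} gV$ is open by hypothesis and also $G$-invariant, closed and compact, with $C \subseteq V_\infty \subseteq U^*$. To obtain almost periodicity I invoke Lemma~\ref{lem:almost_periodic}, reducing to minimality of $\overline{Gx}$ (compactness being clear from $\overline{Gx} \subseteq V_\infty$); this is the main obstacle. Suppose for contradiction that some closed $G$-invariant $M$ is a proper nonempty subset of $\overline{Gx}$; necessarily $x \notin M$, else $\overline{Gx} \subseteq M$. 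By local compactness and zero-dimensionality I can produce a compact open $W$ with $M \subseteq W \subseteq V_\infty$ and $x \notin W$. The hypothesis makes $W_\infty := \bigcap_{g \in G} gW$ open, hence clopen and $G$-invariant, containing $M$ but not $x$. But then $\overline{Gx} \setminus W_\infty$ is a closed $G$-invariant set containing $x$, hence containing the dense subset $Gx$, forcing $W_\infty \cap \overline{Gx} = \emptyset$ and contradicting $\emptyset \neq M \subseteq W_\infty \cap \overline{Gx}$.

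For \eqref{any_gen:4}, given a net $(g_i)$ with $|g_i|\to \infty$ and a neighbourhood $U$ of $x$, almost periodicity supplies a finite set $F$ (which we enlarge to be symmetric) with $FN = G$, where $N = \{h \in G : hx \in U\}$. Lemma~\ref{lem:replete} yields an integer $m$ such that for every $g$ with $|g|\ge m$ there exists $c$ with $|c|=m$ and $Fc \subseteq K(g)$. For each $g_i$ in the tail with $|g_i| \ge m$, pick such a $c_i$ and write $c_i = fn$ with $f \in F$ and $n \in N$; then $c'_i := f^{-1}c_i \in Fc_i \subseteq K(g_i)$ satisfies $c'_i x = nx \in U$ and $|c'_i| \le m + \max_{h\in F}|h|$. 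Passing to this eventual subnet establishes $S$-recurrence with a uniform length bound.
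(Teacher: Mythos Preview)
Your proof is correct and follows essentially the same route as the paper's: part~(i) identifies $R$ with the fibre relation; part~(ii) translates $x$ into $W$ via some $gx$ and uses closedness on the net $(g_ix,gx)$; part~(iii) separates a proper closed invariant subset from $x$ by a compact open $W$ and derives a contradiction from the invariant open $W_\infty$; and part~(iv) applies Lemma~\ref{lem:replete} to the finite set $F$ furnished by almost periodicity. The only cosmetic differences are that the paper applies Lemma~\ref{lem:replete} to $F^{-1}$ rather than symmetrizing $F$, and in part~(iii) it phrases the proper invariant subset as an orbit closure $\overline{Gz}$ rather than an abstract $M$.
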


\begin{proof}
(i)
For each $y \in Y$, we see that $\phi\inv(y)$ is a minimal $G$-set.  As a consequence, given $x,y \in X$, we have $(x,y) \in R$ if and only if $\phi(x) = \phi(y)$.  By continuity of $\phi$, this defines a closed relation on $X$.

(ii)
Let $x \in X$ and $y \in W$ such that $y \in \overline{Gx}$. In particular, since $W$ is a neighbourhood of $y$, $gx \in W$ for some $g \in G$.  We also have $g_ix \rightarrow y$ for some net $(g_i)$ in $G$.  We see that $(g_ix,gx) \in R \cap (W \times W)$ eventually as $i \rightarrow \infty$, so $(y,gx) \in R$, since the restriction of $R$ to $W \times W$ is closed.  Thus $gx \in \overline{Gy}$ and hence $x \in \overline{Gy}$.

(iii)
Let $x \in U^*$.  Then there is a compact open neighbourhood $V$ of $\overline{Gx}$ contained in $U$; by hypothesis, we can take $V$ to be $G$-invariant.  Thus $U^*$ is open.  Suppose that some $y \in U^*$ is not almost periodic for $G$.  Then by Lemma~\ref{lem:almost_periodic} there is $z \in \overline{Gy}$ such that $y \not\in \overline{Gz}$.  There is then a compact open neighbourhood $W$ of $\overline{Gz}$ not containing $y$, and hence by our hypothesis, there is an open $G$-invariant neighbourhood $W_{\infty}$ of $\overline{Gz}$ not containing $y$.  But then $\overline{Gy}$ is disjoint from $W_{\infty}$, so $z \not\in \overline{Gy}$, a contradiction.  Thus every $y \in U^*$ is almost periodic.

(iv)
Let $x$ be an almost periodic point for $G$, let $V$ be a neighbourhood of $x$ and let $N$ be the set of $h \in G$ such that $hx \in V$.  Then there is a finite subset $F$ of $G$ such that $FN = G$, in other words $N$ intersects $F^{-1}g$ for all $g \in G$.  Given a net $(g_i)$ in $G$ such that $|g_i| \rightarrow \infty$, then by Lemma~\ref{lem:replete} there is a subnet $(g_{i(j)})_{j \in J}$ such that for all $j \in J$, there is $d_j$ such that $F\inv d_j \subseteq K(g_{i(j)})$, and moreover $|d_j|$ is bounded. We then choose $f_j \in F\inv$ such that $f_jd_j \in N$ and set $c_j = f_jd_j$; then $|c_j|$ is bounded and $c_jx \in V$ for all $j \in J$.  Hence $x$ is $S$-recurrent.
\end{proof}

We are now ready to prove the main theorem.

\begin{proof}[Proof of Theorem~\ref{thmintro:AGW_generalized}]
Let us first note that for every $x \in U^*$, the set $\overline{Gx}$ is a compact subset of $U^*$.  Indeed, given $x \in U^*$, then $x \in V \subseteq U^*$ for some compact $G$-invariant set $V$, and then $\overline{Gx}$ is a closed subset of $V$.  It follows by Lemma~\ref{lem:equi_product} and induction on $n$ that $S^n$ is equicontinuous at every $x \in U^*$.

We list the implications that have already been proved, together with the place they are proved:

(\ref{mainthm:6}) $\Rightarrow$ (\ref{mainthm:5}): Proposition~\ref{prop:AGW_any_gen}(\ref{any_gen:1}) (applied to the space $U^* = W$);

(\ref{mainthm:5}) $\Rightarrow$ (\ref{mainthm:4}): Proposition~\ref{prop:AGW_any_gen}(\ref{any_gen:2});

(\ref{mainthm:4}) $\Rightarrow$ (\ref{mainthm:3}): Theorem~\ref{thmintro:vinfty};

(\ref{mainthm:3}) $\Rightarrow$ (\ref{mainthm:2}): Proposition~\ref{prop:AGW_any_gen}(\ref{any_gen:3});

(\ref{mainthm:2}) $\Rightarrow$ (\ref{mainthm:1}): Proposition~\ref{prop:AGW_any_gen}(\ref{any_gen:4}).

Now suppose that (\ref{mainthm:1}) is satisfied.  We will first use (\ref{mainthm:1}) to prove (\ref{mainthm:5}), and then, since (\ref{mainthm:5}) has been shown to imply (\ref{mainthm:1})-(\ref{mainthm:4}), we can use conditions (\ref{mainthm:3}) and (\ref{mainthm:4}) to prove (\ref{mainthm:6}) and thereby complete the cycle of implications between the conditions (\ref{mainthm:1})-(\ref{mainthm:6}).

Let $R$ be the $G$-orbit closure relation on $X$.  Since $U^*$ is open, we simply set $W = U^*$.  Suppose $(y_i,z_i)_{i \in I}$ is a net of pairs of points in $R$ such that $(y_i,z_i) \rightarrow (y,z) \in U^* \times U^*$ but $(y,z) \not\in R$, that is, $z \not\in \overline{Gy}$.  There is then a compact open neighbourhood $V$ of $z$ that is contained in $U^*$ and disjoint from $\overline{Gy}$; by passing to a subnet we may assume $y_i \in U^*$ and $z_i \in V$ for all $i \in I$.

Given $i \in I$, let $g_i$ be an element of $G$ of least possible word length such that $g_iy_i \in V$.  Such elements $g_i$ exist since $z_i \in \overline{Gy_i} \cap V$ for all $i$.  Since $V$ is compact, by passing to a subnet we may assume $g_iy_i \rightarrow w$ for some $w \in V$.  Suppose that $|g_i| \not\rightarrow \infty$.  Then we can pass to a subnet on which $g_i$ is bounded, say $|g_i| \le n$.  Since $S^n$ is equicontinuous at $y$ and $y_i \rightarrow y$, for every entourage $E$ there is $i_E \in I$ such that for all $i > i_E$, we have $(g_iy,g_iy_i) \in E$.  It then follows that $g_iy \rightarrow w$, a contradiction, since $w \in V$ whereas $\overline{Gy}$ is disjoint from $V$.  From this contradiction we conclude that in fact  $|g_i| \rightarrow \infty$, and hence $|g\inv_i| \rightarrow \infty$.

Since $w$ is an $S$-recurrent point, on passing to a subnet we may ensure the existence an integer $n$ and elements $c_i$ of $K(g\inv_i)$, such that $c_iw \in V$ and $|c_i| \le n$ for all $i \in I$.  Since $S^n$ is equicontinuous at $U^*$, the elements $\{c_i \mid i \in I\}$ form an equicontinuous family at $w$.  Since $V$ is compact open, there is an entourage $E$ such that
\[
w_1 \in V, (w_1,w_2) \in E \Rightarrow w_2 \in V;
\]
let $W_E$ be a neighbourhood of $w$ such that $(c_iw,c_iw') \in E$ for all $w' \in W_E$ and $i \in I$.  Then for all sufficiently large $i$, we have $g_iy_i \in W_E$, so $(c_iw,c_ig_iy_i) \in E$, and hence $c_ig_iy_i \in V$.  On the other hand, for each $i$, we can write $c_i$ as $c_i = h_ig\inv_i$ where $|h_i| < |g_i|$; thus $c_ig_iy_i = h_iy_i \not\in V$ by the minimality of $|g_i|$.  From this contradiction, we conclude that in fact the restriction of $R$ to $U^* \times U^*$ is closed, so (\ref{mainthm:1}) implies (\ref{mainthm:5}).

Condition (\ref{mainthm:4}) ensures that given $x,y \in U^*$, we have $(x,y) \in R$ if and only if $\overline{Gx} = \overline{Gy}$, so $R$ restricts to a closed equivalence relation $R'$ on $U^*$.  Write $[x]$ for the $R'$-class of $x$, let $Y$ be the set of $R'$-classes and let $\phi: U^* \rightarrow Y$ be given by $x \rightarrow [x]$.  We see that $\phi(gx) = \phi(x)$ for all $x \in U^*$, so $\phi$ is $G$-equivariant with respect to the trivial action on $Y$, and moreover $G$ acts minimally on each fibre.  We equip $Y$ with the quotient topology induced by $\phi$.  The fibres of $\phi$ are of the form $\overline{Gx}$ for $x \in U^*$, so they are compact. Let $\tau$ be the topology of $Y$ and let $\mc{L}$ be the set of compact open $G$-invariant subsets of $U^*$.  Given $V \in \mc{L}$ we see that $V$ is a union of $R'$-classes, so $\phi\inv(\phi(V)) = V$ and in particular $\phi(V), Y \smallsetminus \phi(V) \in \tau$.  On the other hand, given $x \in U^*$ and a neighbourhood $W$ of $x$ containing $\overline{Gx}$, then $W$ contains a compact open neighbourhood of $\overline{Gx}$, and by (\ref{mainthm:3}) we obtain $V \in \mc{L}$ such that $\phi\inv(y) \subseteq V \subseteq W$, so $\{\phi(V) \mid V \in \mc{L}\}$ is a base for $\tau$ and $\tau$ is Hausdorff.  We conclude that $Y$ is locally compact and zero-dimensional, completing the proof of (\ref{mainthm:6}).
\end{proof}

\begin{rem}
Let $X$ be a locally compact zero-dimensional space, let $G \le \Homeo(X)$ be equicontinuously generated, let $Y$ be the interior of the set of almost periodic points of $X$ and let $U$ be a $G$-invariant open subset of $X$.  As a consequence of Theorem~\ref{thmintro:AGW_generalized}, we see that if $U^* \subseteq Y$, then $U^* = U \cap Y$ and $U$ satisfies all of the conditions  (\ref{mainthm:1})--(\ref{mainthm:6}) of the theorem, whereas if $U^* \not\subseteq Y$, then $U$ fails to satisfy any of the conditions  (\ref{mainthm:1})--(\ref{mainthm:6}) of the theorem.  Thus the set $Y$ can be characterized in several equivalent ways, using each of the conditions (\ref{mainthm:1})--(\ref{mainthm:6}) individually.
\end{rem}

As a consequence of Theorem~\ref{thmintro:AGW_generalized}, we obtain a more general version of \cite[Corollary~1.9]{AGW}.  We state a version here where a single orbit is assumed to have compact closure; Corollary~\ref{corintro:distal_to_equi:compact} will then follow immediately as a special case.

\begin{cor}\label{cor:distal_to_equi}
Let $X$ be a locally compact zero-dimensional space equipped with a compatible uniformity, let $S \subseteq \Homeo(X)$ with $1 \in S$ and $S = S^{-1}$, let $G = \langle S \rangle$, and let $x \in X$.  Suppose that $\overline{Gx}$ is compact and that there is a neighbourhood of $\overline{Gx}$ in $X$ consisting of distal points for $G$.  Then the following are equivalent:
\begin{enumerate}[(i)]
\item There is a closed $G$-invariant neighbourhood $U$ of $x$ consisting of equicontinuous points for $S$.
\item There is a compact $G$-invariant neighbourhood $V$ of $x$ on which $G$ acts equicontinuously.
\end{enumerate}
\end{cor}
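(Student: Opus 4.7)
The implication (ii) $\Rightarrow$ (i) is immediate: a compact $G$-invariant set on which $G$ acts equicontinuously is closed in $X$ (being compact in a Hausdorff space) and consists of equicontinuous points for $S \subseteq G$, so one takes $U = V$.

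For (i) $\Rightarrow$ (ii), the plan is to apply Theorem~\ref{thmintro:AGW_generalized} with ambient space $U$ itself, which is locally compact zero-dimensional, $G$-invariant, and on which $S|_U$ is equicontinuous. Let $N_0$ be an open neighbourhood of $\overline{Gx}$ consisting of distal points, and set $N := G N_0$, which is open, $G$-invariant, and still consists of distal points. Let $U_0 := U \cap N$, an open $G$-invariant subset of $U$ containing $\overline{Gx}$. I verify condition~(\ref{mainthm:4}) of Theorem~\ref{thmintro:AGW_generalized} for the pair $(U, U_0)$: if $y \in U_0$ and $z \in U_0^{*}$ with $z \in \overline{Gy}$, then $z$ lies in some compact $G$-invariant subset of $U_0 \subseteq N$, so $\overline{Gz}$ is compact and consists of distal points, and Lemma~\ref{lem:distal_orbit_closure} forces $\overline{Gy} = \overline{Gz}$, whence $y \in \overline{Gz}$. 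All six conditions of the theorem therefore hold. From the construction of the quotient $\phi \colon U_0^{*} \to Y$ in condition~(\ref{mainthm:6}) and the base of compact open $G$-invariant sets appearing in its proof, one obtains a compact open (in $U$) $G$-invariant set $V \subseteq U_0^{*}$ with $\overline{Gx} \subseteq V$. Since $x \in \mathrm{int}_X U$ and $V$ is open in $U$, a short argument shows that $V$ is in fact a neighbourhood of $x$ in $X$, and $V$ is compact in $X$ because $U$ is closed.

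The main obstacle is to show that $G$ acts equicontinuously on $V$. The points of $V$ are distal and $S$-equicontinuous, so by Lemma~\ref{lem:equi_product}(ii) applied inductively each $S^n$ is uniformly equicontinuous on the compact set $V$; the gap to be bridged is from this pointwise-in-$n$ control to uniform equicontinuity across all of $G$. The plan is to exploit three pieces of structure: condition~(\ref{mainthm:6}), which exhibits $V$ as a union of compact minimal $G$-orbit closures fibred over the compact zero-dimensional Hausdorff space $\phi(V)$, with enveloping semigroup on each fibre a compact group by Theorem~\ref{thm:ellis}; condition~(\ref{mainthm:3}), which supplies an abundant family of compact open $G$-invariant subsets of $V$ (preimages under $\phi|_V$ of clopen subsets of $\phi(V)$); and the $S$-recurrence from condition~(\ref{mainthm:1}), which for each $y \in V$ represents elements of $G$ of large word length acting near $y$ as products with one short factor. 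Given an entourage $E$, a sufficiently fine $G$-invariant clopen partition of $V$ pulled back from $\phi(V)$, together with fibrewise equicontinuity extracted from the envelope structure and $S^n$-equicontinuity, should combine via $S$-recurrence into a uniform $G$-equicontinuity estimate on $V$; bridging $S$-equicontinuity to $G$-equicontinuity in this way is the technical heart of the proof.
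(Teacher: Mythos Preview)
Your reduction to a compact $G$-invariant neighbourhood $V \subseteq U$ is essentially the paper's first step.  The gap is the final step: you correctly identify ``bridging $S$-equicontinuity to $G$-equicontinuity on $V$'' as the technical heart, but you only outline a plan involving the fibre structure from condition~(\ref{mainthm:6}) and $S$-recurrence, without actually executing it.  As written it is not clear that this plan converges to a proof; in particular, $S$-recurrence gives you, for each $y$ and each unbounded net $(g_i)$, bounded-length elements $c_j \in K(g_{i(j)})$ returning $y$ to a neighbourhood, but this does not obviously control $g_i y$ itself uniformly over a neighbourhood of $y$, which is what equicontinuity of $G$ requires.

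The paper's idea at this point is shorter and bypasses the difficulty: apply Theorem~\ref{thmintro:AGW_generalized} a \emph{second} time, now to the diagonal action of $G$ on $V \times V$.  Since $G$ is distal on the compact set $V$, Ellis's theorem (Theorem~\ref{thm:ellis}) yields that $G$ has minimal orbit closures on $V \times V$, so condition~(\ref{mainthm:4}) of the theorem holds for that action; and $S$ acts equicontinuously on $V \times V$ because $V$ is compact.  Hence condition~(\ref{mainthm:3}) holds for $V \times V$: every compact open $E \subseteq V \times V$ satisfies $\bigcap_{g \in G} g(E) = \bigcap_{g \in F'} g(E)$ for some finite $F'$, and in particular this intersection is open.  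Taking $E$ to be any compact open entourage (a neighbourhood of the diagonal), the intersection $\bigcap_{g \in G} g(E)$ is then an open $G$-invariant entourage contained in $E$.  Thus the $G$-invariant entourages form a base of the uniformity on $V$, which is exactly the statement that $G$ acts equicontinuously on $V$.  No direct use of $S$-recurrence or of the quotient $\phi$ is needed once $V$ has been found.
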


\begin{proof}
Clearly (ii) implies (i).  Suppose that (i) holds.  Then we are in the situation of Theorem~\ref{thmintro:AGW_generalized}.  Let $U'$ be a neighbourhood of $\overline{Gx}$ in $X$ consisting of distal points for $G$.  Let $Y = U \cap \bigcup_{g \in G}gU'$; then $Y$ is a locally compact $G$-invariant Hausdorff space on which $G$ is distal and $S$ is equicontinuous.  Given $y \in Y$, if the $G$-orbit of $y$ has compact closure, and $y \in \overline{Gz}$ for some $z \in Y$, then $z \in \overline{Gy}$ by Lemma~\ref{lem:distal_orbit_closure} (here we take all closures in $Y$).  Thus the set of homeomorphisms induced by $S$ on $Y$ satisfies the hypotheses of Theorem~\ref{thmintro:AGW_generalized}(\ref{mainthm:4}).  Now take a compact open neighbourhood $W$ of $\overline{Gx}$ in $X$ such that $W \subseteq Y$.  We apply Theorem~\ref{thmintro:AGW_generalized}(\ref{mainthm:3}) to the action of $G$ on $Y$, to obtain a $G$-invariant set $V = \bigcap_{g \in F}g(W)$ where $F$ is finite.  It is clear that $S$ is equicontinuous on $V \times V$.  By Theorem~\ref{thm:ellis}, the action of $G$ on $V \times V$ has minimal orbit closures, so the hypotheses of Theorem~\ref{thmintro:AGW_generalized}(\ref{mainthm:4}) are satisfied for the action of $S$ on $V \times V$.  Applying Theorem~\ref{thmintro:AGW_generalized}(\ref{mainthm:3}) to the latter action, for any compact open subset $E$ of $V \times V$, there is a finite subset $F'$ of $G$ such that $E_{\infty} := \bigcap_{g \in G}g(E) = \bigcap_{g \in F'}g(E)$.  In particular, every neighbourhood $E$ of the diagonal in $V \times V$ contains a compact open neighbourhood $E'$ of the diagonal, which in turn contains a \emph{$G$-invariant} neighbourhood $(E')_{\infty}$ of the diagonal.  Thus $G$ acts equicontinuously on $V$, proving (ii).
\end{proof}

\end{document}